\theoremstyle{plain}
\newtheorem{theorem}    {Theorem}
\newtheorem{lemma}      [theorem]{Lemma}
\newtheorem{proposition}[theorem]{Proposition}
\theoremstyle{definition}
\theoremstyle{remark}
\newtheorem{remark}              {Remark}
\numberwithin{equation}{section}
\def\A{\mathbb A}
\def\C{\mathbb C}
\def\F{\mathbb F}
\def\Q{\mathbb Q}
\def\Z{\mathbb Z}
\begin{document}

\title{Further refinement of strong multiplicity one for GL(2)}

\author{Nahid Walji}
\address{\'{E}cole Polytechnique F\'{e}d\'{e}rale de Lausanne, Station 8, CH-1015 Lausanne, Switzerland}
\email{nahid.walji@epfl.ch}
\subjclass[2010]{Primary 11F30, 11F41}
\maketitle
\begin{abstract}
We obtain a sharp refinement of the strong multiplicity one theorem for the case of unitary non-dihedral cuspidal automorphic representations for GL(2).
Given two unitary cuspidal automorphic representations for GL(2) that are not twist-equivalent, we also find sharp lower bounds for the number of places where the Hecke eigenvalues are not equal, for both the general and non-dihedral cases.
We then construct examples to demonstrate that these results are sharp.
\end{abstract}

\section*{Introduction}
\label{intro}
Given a number field $F$, let $\mathcal{A}_0 ({\rm GL}_n(\A_F)) $ be the set of cuspidal automorphic representations for $ {\rm GL}_n(\A_F)  $ with unitary central character. For $\pi \in \mathcal{A}_0 ({\rm GL}_n(\A_F))$, at any finite place $v$ of $F$ where $\pi$ is unramified we denote the Langlands conjugacy class by $A_v (\pi)  \subset  {\rm GL}_n(\C) $, which we represent by the matrix $ {\rm diag}\{\alpha _{1, v }, \dots, \alpha _{n,v} \}$. Let $a _v (\pi) $ denote the trace of this matrix.

Let us now set $n = 2$. Given $\pi,\pi' \in \mathcal{A}_0 ({\rm GL}_2(\A_F)) $, one can compare local data to determine whether $\pi$ and $\pi'$ are globally isomorphic. In this context we ask the following question: if we have a set $S$ such that $a _v (\pi) = a _v (\pi') $ for all $v \not \in  S$, what property of $S$ is sufficient to establish that $\pi$ and $\pi'$ are globally isomorphic?

One approach involves establishing a condition on the size of $S$. The strong multiplicity one theorem of Jacquet--Shalika~\cite{JS81} states that it is sufficient for $S$ to be finite. In 1994, D.~Ramakrishnan~\cite{Ra94} proved that a set $S$ of density less than 1/8 is sufficient. This bound is sharp, as shown by an example of J.-P. Serre~\cite{Se77} of a pair of dihedral automorphic representations (see Section~\ref{s4-4} for details).

This naturally leads to the question of whether the bound can be increased if we exclude dihedral automorphic representations. 
Given $\pi, \pi' \in \mathcal{A}_0({\rm GL}_2(\A_F))$, let us define the set 
\begin{align*}
S = S (\pi,\pi') := \{ v \mid v \text{ unramified for }\pi \text{ and }\pi', a_v(\pi) \neq a_v(\pi')  \}.
\end{align*}
We will show:

\begin{theorem} \label{t1}
Let $\pi, \pi ' \in  \mathcal{A}_0 ({\rm GL}_2(\A_F))$ be distinct non-dihedral 
 representations.
 Then, if $\underline{\delta} (S)$ is the lower Dirichlet density of the set $S = S (\pi, \pi')$, we have
  \begin{align*}
\underline{\delta} (S) \geq \frac 14.
  \end{align*}
\end{theorem}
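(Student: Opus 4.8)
The plan is to study the symmetric square lifts and the Rankin--Selberg $L$-functions attached to $\pi$ and $\pi'$. Write $a_v := a_v(\pi)$, $b_v := a_v(\pi')$, and for an unramified place $v$ set $x_v := a_v - b_v$, which vanishes precisely outside $S$. The classical approach (following Ramakrishnan) is to feed the trivial bound $|x_v|^2 \le$ (something) into a comparison of the analytic behaviour of $\prod_v$-type Euler products at the edge of the region of absolute convergence, using the existence of $\mathrm{Ad}\,\pi = \mathrm{sym}^2\pi \otimes \omega_\pi^{-1}$ as an isobaric automorphic representation of $\mathrm{GL}_3$ (Gelbart--Jacquet), together with the cuspidality criterion: $\mathrm{Ad}\,\pi$ is cuspidal if and only if $\pi$ is non-dihedral. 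This cuspidality is exactly where the non-dihedral hypothesis enters and is what lets us push past the $1/8$ barrier.

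Concretely, I would introduce the four degree-$\le 9$ Rankin--Selberg $L$-functions $L(s,\pi\times\tilde\pi)$, $L(s,\pi'\times\tilde{\pi}')$, $L(s,\pi\times\tilde{\pi}')$, $L(s,\pi'\times\tilde\pi)$, take logarithms, and form the combination $\sum_v |a_v - b_v|^2 |q_v|^{-s} = \sum_v (a_v\bar a_v + b_v \bar b_v - a_v \bar b_v - b_v \bar a_v)|q_v|^{-s}$ modulo holomorphic functions on $\mathrm{Re}(s) \ge 1$. Since $\pi \not\cong \pi'$ and (being non-dihedral) they are certainly not twist-equivalent by a nontrivial character unless $\pi' \cong \pi \otimes \chi$ — here one must be a little careful, so I would split into the case $\pi' \cong \pi \otimes \chi$ for some (necessarily quadratic-order-obstructed, but possibly nontrivial) Hecke character and the case where $\pi,\pi'$ are not twist-equivalent at all. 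In the twist-equivalent case the two Rankin--Selberg cross terms $L(s,\pi\times\tilde{\pi}')$ pick up a pole governed by $\chi$, while in the genuinely inequivalent case they are holomorphic and non-vanishing at $s=1$; in either situation one extracts that $\sum_{v} |a_v - b_v|^2 |q_v|^{-s}$ has a pole at $s=1$ of a \emph{definite} order, and the key improvement over the $\mathrm{GL}_2\times\mathrm{GL}_2$-only argument is that using $\mathrm{Ad}\,\pi$, $\mathrm{Ad}\,\pi'$ and $L(s,\mathrm{Ad}\,\pi \times \mathrm{Ad}\,\pi')$ (all available and with known pole orders because the $\mathrm{Ad}$'s are cuspidal on $\mathrm{GL}_3$) sharpens the upper bound on $|a_v-b_v|^2$ after removing the contribution already accounted for, so that the resulting density inequality reads $\underline\delta(S) \ge 1/4$ rather than $1/8$.

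The mechanism for converting the pole-order data into a density lower bound is the standard Tauberian / Dirichlet-density dictionary: if $\sum_{v\in S} |q_v|^{-s}$ has a pole at $s=1$ of order at least $m$ after comparison, then $\underline\delta(S)\ge m$ relative to the full set of places (which has a simple pole), but since densities are $\le 1$ one instead squeezes $|a_v-b_v|^2 \le C \cdot (\text{structure})$ pointwise on $S$ and balances. More precisely I expect the clean route is: on the one hand $\sum_v |a_v-b_v|^2|q_v|^{-s} \sim (\text{const})\cdot\log\frac1{s-1}$ with a \emph{computable} constant coming from the four (or six, with the $\mathrm{Ad}$'s) $L$-functions; on the other hand, by Cauchy--Schwarz or by the Ramanujan-on-average bound $\sum_{v}|a_v|^2|q_v|^{-s}\sim \log\frac1{s-1}$ etc., the left side is at most $(\text{something})\cdot\sum_{v\in S}|q_v|^{-s} \sim (\text{something})\cdot\underline\delta(S)^{-1}$-type quantity; comparing constants gives $\underline\delta(S)\ge 1/4$. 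Serre's dihedral example shows the constant is optimal in the non-dihedral-free statement, i.e. one genuinely needs cuspidality of $\mathrm{Ad}$.

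The main obstacle, I expect, is bookkeeping the \emph{exact} pole orders at $s=1$ for every Rankin--Selberg product that appears, in all the sub-cases (whether $\pi'$ is a twist of $\pi$ or not, whether $\mathrm{Ad}\,\pi \cong \mathrm{Ad}\,\pi'$ or not — the latter can happen without $\pi\cong\pi'$ only in the tetrahedral/octahedral-type situations, which must be excluded or handled). One must invoke: holomorphy and non-vanishing of $L(s,\sigma\times\tilde\sigma')$ at $s=1$ for inequivalent cuspidal $\sigma,\sigma'$ (Jacquet--Shalika), the simple pole of $L(s,\sigma\times\tilde\sigma)$ (Rankin--Selberg theory), and the isobaric decomposition of $\mathrm{Ad}\,\pi \boxtimes \mathrm{Ad}\,\pi'$ when the $\mathrm{Ad}$'s coincide. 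Assembling these correctly — and verifying that the worst case really does yield the constant $1/4$ and not something weaker — is the delicate part; the Tauberian passage and the positivity input $|a_v-b_v|^2\ge 0$ are routine by comparison.
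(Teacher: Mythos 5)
Your high-level strategy is pointed in the right direction: compare Dirichlet-series asymptotics as $s\to 1^+$, use Cauchy--Schwarz together with the lower Dirichlet density, and exploit cuspidality of the adjoint lift as the precise place where the non-dihedral hypothesis is used. But as written the proposal is missing the decisive technical ingredient and would not get past Ramakrishnan's $1/8$ bound.

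The crucial step the paper takes, and you do not, is to apply Cauchy--Schwarz in the form
\begin{align*}
\sum_v \frac{|a_v-b_v|^2}{\mathrm{N}v^s} \ \le\ \Bigl(\sum_v \frac{|a_v-b_v|^4}{\mathrm{N}v^s}\Bigr)^{1/2}\Bigl(\sum_{v\in S}\frac{1}{\mathrm{N}v^s}\Bigr)^{1/2},
\end{align*}
so the whole game is to bound the order of the pole of the \emph{fourth} moment $\sum_v |a_v-b_v|^4\,\mathrm{N}v^{-s}$. Expanding $|a_v-b_v|^4$ produces monomials $a_v^i\overline{a_v}^j b_v^k\overline{b_v}^l$ with $i+j+k+l=4$, and these are governed by the quadruple Rankin--Selberg $L$-functions of the form $L^T(s,\pi\boxtimes\pi\times\overline{\pi}\boxtimes\overline{\pi'})$ and their relatives (Lemma~\ref{s2lem1}). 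Their Clebsch--Gordan decompositions are \emph{not} expressible purely in terms of $\mathrm{Ad}\,\pi$, $\mathrm{Ad}\,\pi'$ and $L(s,\mathrm{Ad}\,\pi\times\mathrm{Ad}\,\pi')$; the monomials with $(i,j,k,l)=(2,1,0,1)$, $(1,2,1,0)$, etc.\ force you to introduce the symmetric cube: for instance
\begin{align*}
L^T\left(s, \pi \boxtimes \pi \times \overline{\pi} \boxtimes \overline{\pi'}\right) = L^T\left(s, (\overline{\omega}\otimes \mathrm{Sym}^3 (\pi)) \times \overline{\pi'} \right) L^T\left(s, \pi \times \overline{\pi'} \right)^2.
\end{align*}
This is where Kim--Shahidi's functoriality of $\mathrm{Sym}^3$ enters, alongside Gelbart--Jacquet for $\mathrm{Sym}^2$. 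Your proposal names only $\mathrm{Ad}=\mathrm{Sym}^2\otimes\omega^{-1}$, which is insufficient to control those cross monomials. Once all the pole orders are bounded (the $k(i,j,k,l)$ in the paper's notation), the fourth moment has a pole of order at most $16$, the second moment has one of order exactly $2$ (since $\pi\not\cong\pi'$), and Cauchy--Schwarz gives $2\le 4\,\underline\delta(S)^{1/2}$, i.e.\ $\underline\delta(S)\ge 1/4$.

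Two further corrections. First, the case split that matters is not twist-equivalent vs.\ non-twist-equivalent (the pole-order bounds used are valid for both at the same time; the distinction only matters in Theorem~\ref{t3}(ii)). What you actually must handle separately is the \emph{tetrahedral} case, because then $\mathrm{Sym}^3\pi$ is not cuspidal; the paper uses Kim--Shahidi's cuspidality criterion $\mathrm{Sym}^3\pi\otimes\omega^{-1}\simeq(\pi\otimes\nu)\boxplus(\pi\otimes\nu^2)$ and shows the relevant pole order is still controlled (indeed, if the pole order of the $\mathrm{Sym}^3$-term goes up, the corresponding density bound improves to $1/2$, so $1/4$ still holds). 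Second, the heuristic of ``squeezing $|a_v-b_v|^2\le C\cdot(\text{structure})$ pointwise on $S$'' is exactly the pointwise-Ramanujan route that caps out at $1/8$; the passage to $1/4$ really does require the fourth moment and not a sharper pointwise bound on the second.
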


\begin{remark}
This bound is sharp, as established in Section~\ref{s4-1} by a pair of octahedral automorphic representations.

The question of what bound can be established when exactly one of the representations is dihedral is addressed later by Theorem~\ref{t3}.
\end{remark}

A related question involves weakening the requirement of global isomorphism to that of twist-equivalence. Specifically, we can ask: given a set $S$ such that $a _v (\pi) = a _v (\pi') $ for all $v \not \in  S $, what property of $S$ is sufficient to establish that $\pi$ and $\pi'$ are twist-equivalent?

We will prove:
\begin{theorem} \label{t2}
Let $\pi, \pi ' \in  \mathcal{A}_0 ({\rm GL}_2(\A_F))$ be representations that are not twist-equivalent.
For $S = S (\pi,\pi')$, we have
  \begin{align*}
\underline{\delta} (S) \geq \frac 29.
  \end{align*}
\end{theorem}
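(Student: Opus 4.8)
\emph{Proof sketch.} The plan is to convert the statement into a comparison between the order of the pole at $s=1$ of a suitable product of Rankin--Selberg $L$-functions and the pointwise size of its Dirichlet coefficients. The mechanism is the following: if $(c_v)$ is a family with $c_v\ge 0$ for all $v$, $c_v=0$ for $v\notin S$, $c_v\le M$ for $v\in S$, and $\sum_v c_v\, Nv^{-s}$ coincides, up to a function holomorphic and bounded near $s=1$, with the logarithm of a product of automorphic $L$-functions whose total order of pole at $s=1$ equals $N$, then a standard estimate yields $\underline{\delta}(S)\ge N/M$. So I want to build such a family with $N/M\ge 2/9$. The tools are the Gelbart--Jacquet lift $\mathrm{sym}^2\pi$ (and $\mathrm{sym}^3\pi$, via Kim--Shahidi, if it helps); the facts that for isobaric automorphic $\sigma,\tau$ the $L$-function $L(s,\sigma\times\tilde\tau)$ has a pole at $s=1$ of order equal to the number of common cuspidal constituents and is non-vanishing there; and the principle that $\pi\sim\pi'$ exactly when $\mathrm{Ad}\,\pi\cong\mathrm{Ad}\,\pi'$, where $\mathrm{Ad}\,\pi=\mathrm{sym}^2\pi\otimes\omega_\pi^{-1}$.

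Since $\pi\not\sim\pi'$ we have both $\pi\not\cong\pi'$ (so $L(s,\pi\times\tilde\pi')$ is holomorphic and non-zero at $s=1$) and $\mathrm{Ad}\,\pi\not\cong\mathrm{Ad}\,\pi'$. I would first dispose of several cases cheaply. If $\pi$ and $\pi'$ are both non-dihedral, Theorem~\ref{t1} already gives $\underline{\delta}(S)\ge 1/4>2/9$. If exactly one, say $\pi$, is dihedral, then $a_v(\pi)=0$ on a set of density $1/2$ while $a_v(\pi')=0$ only on a set of density $0$ (as $\mathrm{sym}^2\pi'$ is cuspidal), so $\underline{\delta}(S)\ge 1/2$. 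If both are dihedral, say $\pi=\mathrm{AI}_{K/F}(\xi)$ and $\pi'=\mathrm{AI}_{K'/F}(\xi')$ with $K\ne K'$, a Chebotarev count in $\mathrm{Gal}(KK'/F)$ (there is a density-$1/4$ set of places inert in $K$ and split in $K'$, where $a_v(\pi)=0\ne a_v(\pi')$ off a density-zero subset) again gives $\underline{\delta}(S)\ge 1/4$. So everything comes down to two dihedral representations attached to a \emph{common} quadratic field $K$.

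In that case $\mathrm{Ad}\,\pi=\eta_{K/F}\boxplus\mathrm{AI}_{K/F}(\xi^{1-\sigma})$ and $\mathrm{Ad}\,\pi'=\eta_{K/F}\boxplus\mathrm{AI}_{K/F}(\xi'^{1-\sigma})$, with the \emph{same} isobaric summand $\eta_{K/F}$, and $\mathrm{Ad}\,\pi\not\cong\mathrm{Ad}\,\pi'$ forces the cuspidal parts $\rho:=\mathrm{AI}_{K/F}(\xi^{1-\sigma})$ and $\rho':=\mathrm{AI}_{K/F}(\xi'^{1-\sigma})$ to be distinct. After matching central characters (see below), $a_v(\pi)=a_v(\pi')$ off $S$ implies $a_v(\mathrm{Ad}\,\pi)=a_v(\mathrm{Ad}\,\pi')$, hence $a_v(\rho)=a_v(\rho')$, off $S$. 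I would then take $c_v$ to be a non-negative linear combination of $|a_v(\pi)-a_v(\pi')|^2$, $|a_v(\mathrm{Ad}\,\pi)-a_v(\mathrm{Ad}\,\pi')|^2$ (and possibly higher symmetric-power differences): the associated ratio of Rankin--Selberg $L$-functions has its pole order lowered by the shared $\eta_{K/F}$ but still strictly positive (the convolution $L(s,\rho\times\tilde\rho')$ has no pole since $\rho\not\cong\rho'$), while the coefficients on $S$ are bounded using the \emph{unconditional} estimates $|a_v(\mathrm{AI}_{K/F}(\psi))|\le 2$ for unitary Hecke characters $\psi$ and $|a_v(\eta_{K/F})|=1$. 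Optimizing the weights of the combination to maximize $N/M$ should produce $\underline{\delta}(S)\ge 2/9$; the same optimization in the non-dihedral regime recovers $1/4$, consistent with the sharp dihedral example showing $2/9$ cannot be improved here.

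The preliminary point running through the argument is the comparison of the central characters $\omega_\pi,\omega_{\pi'}$. If $a_v(\pi)=a_v(\pi')$ off $S$, then using known bounds towards the Ramanujan conjecture and the unitarity of $\omega_\pi,\omega_{\pi'}$, the full Satake data of $\pi$ and $\pi'$ agree off $S$ except possibly where $a_v(\pi)=a_v(\pi')=0$; when $\pi$ is non-dihedral this is a density-zero set and $\omega_\pi=\omega_{\pi'}$ follows, but in the dihedral case it has density $1/2$, so one must argue differently — e.g.\ descend the comparison to Hecke characters of $K$, or twist $\pi'$ to equalise central characters while controlling the change in $S$ — before the identity $a_v(\mathrm{Ad}\,\pi)=a_v(\mathrm{Ad}\,\pi')$ on $S^{c}$ can be used. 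The hard part, as I see it, is precisely the two-dihedral common-field case: getting the pole-order bookkeeping for the non-cuspidal isobaric adjoint lifts right under only the hypothesis $\pi\not\sim\pi'$, and then carrying out the optimization so that the final constant is $2/9$ rather than the $1/8$ delivered by the single choice $c_v=|a_v(\pi)-a_v(\pi')|^2$. The central-character comparison in the dihedral range is the secondary difficulty.
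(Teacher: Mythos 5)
Your overall frame (reduce to the dihedral cases, exploit non-twist-equivalence through the adjoint, read off pole orders of Rankin--Selberg $L$-functions) is in the spirit of the paper, but the quantitative mechanism you state cannot reach $2/9$ in the decisive case, namely two dihedral representations induced from a common quadratic field $K$ with neither $\nu/\nu^\tau$ nor $\mu/\mu^\tau$ Galois-invariant. Your mechanism is linear: $c_v\ge 0$, $c_v=0$ off $S$, $c_v\le M$ on $S$, pole order $N$, hence $\underline{\delta}(S)\ge N/M$. For $c_v=|a_v(\pi)-a_v(\pi')|^2$ this gives $N=2$, $M=16$, i.e.\ $1/8$; for $c_v=|a_v({\rm Ad}\,\pi)-a_v({\rm Ad}\,\pi')|^2$ one gets $N=2$ (pole orders $2,2,1$ for ${\rm Ad}\,\pi\times{\rm Ad}\,\pi$, ${\rm Ad}\,\pi'\times{\rm Ad}\,\pi'$, ${\rm Ad}\,\pi\times{\rm Ad}\,\pi'$) and $M=36$, i.e.\ $1/18$; and since the ratio $N/M$ of a non-negative combination is at most the maximum of the individual ratios, no optimization over such combinations exceeds $1/8<2/9$. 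The paper's $2/9$ comes from a genuinely different mechanism: Cauchy--Schwarz between the second moment $\sum|a_v-b_v|^2{\rm N}v^{-s}$ (pole order $2$) and the fourth moment $\sum|a_v-b_v|^4{\rm N}v^{-s}$, whose pole order is bounded by $18$ via the quadruple-product identities (order $\le 3$ for the $\pi\boxtimes\overline{\pi}\times\pi\boxtimes\overline{\pi}$-type factors, $\le 2$ for the mixed ones, $\le 1$ for the cubic-type ones), yielding $\underline{\delta}(S)\ge 2^2/18=2/9$. For the same reason your remark that the optimization ``recovers $1/4$'' in the non-dihedral regime fails: Theorem~\ref{t1}'s $1/4$ is $2^2/16$ from Cauchy--Schwarz, whereas the linear mechanism gives only $1/8$ there.

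Two of your ``cheap'' case disposals also rest on density claims that are false or not known. (a) Exactly one dihedral: the assertion that $a_v(\pi')=0$ only on a density-zero set for non-dihedral $\pi'$ is a theorem (of Serre) only for holomorphic forms of weight at least $2$; for general cuspidal representations (e.g.\ Maa{\ss} forms) the best known upper bound on that density --- as this paper itself notes --- is $1/2$, which is exactly the density of inert places where $a_v(\pi)=0$, so your argument gives nothing here; the paper instead runs the quadruple-product analysis in this case and obtains $2/7$. (b) Two dihedral representations from different fields: the set of places split in $K'$ with $a_v(\pi')=0$ need not have density zero --- it is governed by $\xi'/\xi'^{\tau}$ taking the value $-1$, which has positive density whenever that character has finite order (half of the split places when $\pi'$ has the paper's property P), so your Chebotarev count can drop to $1/8$. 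Relatedly, the paper's case division is by property P (whether $\nu/\nu^\tau$ is $\tau$-invariant), with separate elementary arguments when one or both representations have it (the $|a_v|\in\{0,2\}$ versus not argument, and a direct comparison of the character-sum adjoints, each giving $1/4$); your sketch defers precisely this ``non-cuspidal adjoint'' bookkeeping, which is where the work lies. One small simplification available to you: the central-character worry is unnecessary, since dihedral representations are tempered at unramified places, so $a_v({\rm Ad}\,\pi)=|a_v(\pi)|^2-1$ and equality of the $a_v$'s transfers to the adjoints without any twisting.
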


\begin{remark}
This bound is sharp, as demonstrated by a pair of dihedral automorphic representations associated to the group $S_3$ (see Section~\ref{s4-2}).
\end{remark}

We ask if the bound can be increased if we specify that one, or both, of the cuspidal automorphic representations is non-dihedral. We obtain
\begin{theorem} \label{t3}
Let $\pi, \pi ' \in  \mathcal{A}_0 ({\rm GL}_2(\A_F))$ be 
non-twist-equivalent. For $S = S (\pi, \pi')$:\\
(i) If exactly one of the representations is non-dihedral, then
  \begin{align*}
\underline{\delta} (S) \geq \frac 27.
  \end{align*}
(ii) If both of the representations are non-dihedral, then
  \begin{align*}
\underline{\delta} (S) \geq \frac 25.
  \end{align*}
\end{theorem}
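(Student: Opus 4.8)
The plan is to run the standard Rankin--Selberg machine, using the symmetric-square and adjoint lifts together with isobaric decomposition, much as in Ramakrishnan's argument and its refinements. The starting point is the analytic behaviour of the four Rankin--Selberg $L$-functions attached to the pair $\{\pi,\pi'\}$: the orders of pole of $L(s,\pi\times\tilde\pi)$, $L(s,\pi'\times\tilde\pi')$, $L(s,\pi\times\tilde\pi')$, $L(s,\pi'\times\tilde\pi)$ at $s=1$ encode, via the Chebotarev/Dirichlet-density interpretation of Dirichlet series with nonnegative coefficients, a lower bound for $\underline\delta(S)$ in terms of the quantity
\begin{align*}
m := \operatorname{ord}_{s=1}L(s,\pi\times\tilde\pi') \, .
\end{align*}
Concretely, writing $b_v = a_v(\pi)\overline{a_v(\pi)}+a_v(\pi')\overline{a_v(\pi')}-a_v(\pi)\overline{a_v(\pi')}-a_v(\pi')\overline{a_v(\pi)}=|a_v(\pi)-a_v(\pi')|^2\ge 0$, the series $\sum_v b_v v^{-s}$ is supported on $S$ and has a pole of order $2-2m$ (when $\pi,\pi'$ are not twist-equivalent, $m\in\{0\}$ unless there is a self-twist situation, but in the non-twist-equivalent case $L(s,\pi\times\tilde\pi')$ is entire, so the pole has order $2$); meanwhile an upper bound for the ``mass'' of $S$ comes from bounding $\sum_v|a_v(\pi)-a_v(\pi')|^2 v^{-s}$ above by a multiple of $\log\frac{1}{s-1}$ using the factorization of $\pi\times\tilde\pi$ and $\pi'\times\tilde\pi'$ into isobaric sums of the trivial representation and the (adjoint) ${\rm Ad}\,\pi$, ${\rm Ad}\,\pi'$. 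The ratio of these two quantities is what produces a bound of the shape $\underline\delta(S)\ge 2/(k+k')$ where $k,k'$ record how many cuspidal constituents appear in ${\rm Ad}\,\pi\boxplus{\bf 1}$ and ${\rm Ad}\,\pi'\boxplus{\bf 1}$ — i.e. in how refined a way ${\rm Ad}\,\pi\otimes{\rm Ad}\,\pi'$ decomposes.

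The key case analysis, then, is: how does ${\rm Ad}\,\pi\otimes\overline{{\rm Ad}\,\pi'}$ decompose, and does ${\bf 1}$ occur in it? Since $\pi$ and $\pi'$ are not twist-equivalent, ${\rm Ad}\,\pi\not\cong{\rm Ad}\,\pi'$, so $L(s,{\rm Ad}\,\pi\times\overline{{\rm Ad}\,\pi'})$ is entire; the worst case (smallest density) arises when this triple product is as ``reducible'' as possible. This is exactly where the dihedral/non-dihedral dichotomy enters: if $\pi$ is non-dihedral then ${\rm Ad}\,\pi$ is cuspidal on ${\rm GL}_3$ (Gelbart--Jacquet), whereas if $\pi$ is dihedral, ${\rm Ad}\,\pi$ decomposes as $\chi\boxplus\sigma$ with $\chi$ a nontrivial quadratic character and $\sigma$ a (possibly cuspidal, possibly further reducible) representation of ${\rm GL}_2$. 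In part (ii), both adjoints are cuspidal on ${\rm GL}_3$, and one estimates $\operatorname{ord}_{s=1}L(s,{\rm Ad}\,\pi\times\overline{{\rm Ad}\,\pi'})$ and the analogous pole counts: the relevant count gives denominator $5$, yielding $2/5$. In part (i), the dihedral member contributes the split piece $\chi\boxplus\sigma$, so ${\rm Ad}\,\pi'$ (cuspidal, ${\rm GL}_3$) paired against it can share at most the constituents dictated by $\sigma\times\overline{{\rm Ad}\,\pi'}$ and $\chi\times\overline{{\rm Ad}\,\pi'}$; here one needs to rule out ${\rm Ad}\,\pi'\cong\sigma$ (impossible by dimension, $3\ne2$) and control the remaining overlap, producing denominator $7$ and the bound $2/7$.

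I would organize the proof as follows. First, record the isobaric decompositions of $\pi\times\tilde\pi$, $\pi'\times\tilde\pi'$, $\pi\times\tilde\pi'$ (the last being cuspidal or at worst isobaric of length $2$ but with no trivial constituent, by non-twist-equivalence), and of the relevant adjoint products, citing Gelbart--Jacquet for the symmetric square, Ramakrishnan for the ${\rm GL}_2\times{\rm GL}_2$ functoriality used to form ${\rm Ad}\,\pi\times{\rm Ad}\,\pi'$ via ${\rm Sym}^2$, and the classification of dihedral forms for the split case. Second, set up the positive Dirichlet series $\sum b_v v^{-s}$ and express both its order of pole at $s=1$ (from below) and its ``size'' (from above) in terms of the pole orders of the constituent Rankin--Selberg $L$-functions; this is the step where the numerical bound gets pinned down, and I would separate it into the two sub-cases of Theorem~\ref{t3}. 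Third, carry out the case analysis on ${\rm Ad}\,\pi\otimes\overline{{\rm Ad}\,\pi'}$, in each case bounding how many constituents can be shared and hence the pole order of the auxiliary $L$-function, and check that the extremal configuration is actually consistent (this is where one confirms the bound is not improvable, matching the sharp examples in Sections~\ref{s4-2} and the dihedral constructions). The main obstacle I anticipate is the bookkeeping in part (i): when one representation is dihedral its adjoint splits, and one must carefully track which isobaric pieces of ${\rm Ad}\,\pi\boxplus{\bf 1}$ can coincide with pieces of ${\rm Ad}\,\pi'\boxplus{\bf 1}$ without forcing twist-equivalence — in particular ruling out the various degenerate matchings (a ${\rm GL}_1$ piece equalling a ${\rm GL}_1$ piece, forcing equality of the associated quadratic characters and hence of the CM fields, etc.) — and ensuring the count of genuinely distinct constituents is exactly right to land on $7$ rather than something larger or smaller.
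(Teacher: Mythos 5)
Your outline assembles the right cast of objects (adjoint lifts, Rankin--Selberg factorizations, the dihedral splitting of ${\rm Ad}\,\pi$), but the central analytic mechanism is missing, and the step you propose in its place does not yield a density bound. Writing $d_v=|a_v(\pi)-a_v(\pi')|^2$, you correctly note that $\sum_v d_v\,{\rm N}v^{-s}$ has leading term $2\log\frac{1}{s-1}$ when the cross Rankin--Selberg factor has no pole; but your proposed ``upper bound for the mass of $S$'' is a bound on this \emph{same} series, and comparing a series with itself says nothing about $\sum_{v\in S}{\rm N}v^{-s}$. To convert the lower bound $\sum_{v\in S}d_v\,{\rm N}v^{-s}\sim 2\log\frac{1}{s-1}$ into a bound on $\underline{\delta}(S)$ one needs either a pointwise bound on $d_v$ (the Ramanujan bound $d_v\le 16$ only recovers Ramakrishnan's $1/8$) or, as the paper does, Cauchy--Schwarz against the \emph{fourth} moment, as in inequality~(\ref{s2ineq1}): $\sum_v d_v C_S(v){\rm N}v^{-s}\le\bigl(\sum_v d_v^2\,{\rm N}v^{-s}\bigr)^{1/2}\bigl(\sum_{v\in S}{\rm N}v^{-s}\bigr)^{1/2}$. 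The entire content of the proof is then an upper bound on the pole order of $\sum_v|a_v(\pi)-a_v(\pi')|^4{\rm N}v^{-s}$, which forces the analysis of the degree-$16$ quadruple products $\pi\boxtimes\pi\times\overline{\pi}\boxtimes\overline{\pi}$, $\pi\boxtimes\pi\times\overline{\pi}\boxtimes\overline{\pi'}$, $\pi\boxtimes\pi\times\overline{\pi'}\boxtimes\overline{\pi'}$ and $\pi\boxtimes\overline{\pi}\times\pi'\boxtimes\overline{\pi'}$ (Lemma~\ref{s2lem1}); in particular one needs ${\rm Sym}^3\pi$ to control the odd cross terms of type $a_v^2\overline{a_v}\,\overline{b_v}$, and ${\rm Ad}\,\pi\times{\rm Ad}\,\pi'$ for the $|a_v|^2|b_v|^2$ term. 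None of this appears in your plan, and the substitute formula $\underline{\delta}(S)\ge 2/(k+k')$, with $k,k'$ counting isobaric constituents of ${\rm Ad}\,\pi\boxplus 1$ and ${\rm Ad}\,\pi'\boxplus 1$, is not derived from anything and does not even reproduce the stated constants (such a count would give $5$, not $7$, in part (i)).

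The claimed denominators $5$ and $7$ are asserted rather than obtained, and the features that actually produce them are absent from your sketch. In part (ii) the constant is $8+2c$: the diagonal terms $\sum|a_v|^4$ and $\sum|b_v|^4$ each contribute pole order $2$ (since the cuspidal, self-dual adjoints give $L^T(s,{\rm Ad}\,\pi\times{\rm Ad}\,\pi)\,\zeta_F^T(s)$ a double pole), the term $\sum|a_v|^2|b_v|^2$ contributes $1$ with multiplicity four (non-twist-equivalence makes $L^T(s,{\rm Ad}\,\pi\times{\rm Ad}\,\pi')$ invertible at $s=1$), and the square-square terms $\sum a_v^2\overline{b_v}^2$ contribute $c\in\{0,1\}$ according to whether $\omega=\omega'$; hence $\underline{\delta}(S)\ge 4/(8+2c)$, which is $2/5$ precisely when the central characters agree --- a dependence your argument never sees. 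In part (i) the constant is $14$, and the decisive new phenomenon is that the dihedral member's fourth moment $\sum|b_v|^4{\rm N}v^{-s}$ can have a pole of order $4$ (via $\overline{\pi'}\boxtimes\overline{\pi'}\simeq I(\overline{\mu}^2)\boxplus\overline{\mu\mu^\tau}\boxplus\overline{\mu\mu^\tau}\chi$), rather than $2$; it is this jump, not a count of shared constituents of the two adjoints, that degrades the bound from $2/5$ to $2/7$. As written, the proposal cannot reach either constant.
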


\begin{remark}
The second bound is sharp, as demonstrated by a pair of odd icosahedral automorphic representations (see Section~\ref{s4-3}).

Note that the first bound applies to all pairs consisting of a dihedral automorphic representation and a non-dihedral cuspidal automorphic representation, since the representations will always be twist-inequivalent.
It does not appear that this bound is sharp. This may be because, unlike in the other three cases, we have forcibly broken the symmetry in the pair - one is dihedral and the other is not. 

\end{remark}

Given that Theorem~\ref{t1} is sharp, one might ask what analogous theorem to expect if we only consider cuspidal automorphic representations that are neither dihedral nor octahedral. Following the same method as in the proof of Theorem~\ref{t1} does not seem to lead to improved bounds in this case. However, based on examples, we expect that there would be a lower bound of 3/8 for $\underline{\delta}(S)$, which would be sharp in both the tetrahedral and icosahedral cases.

Beyond that, if we restrict ourselves to non-polyhedral cuspidal automorphic representations, it is natural to conjecture 
that two such representations are globally isomorphic if they agree locally for a set of places of density greater than 1/2. This would be sharp due to the following example: Let $\pi$ be a cuspidal automorphic representation that corresponds to a non-dihedral holomorphic newform of weight greater than 1. We observe that the condition on the weight, along with being non-dihedral, then implies that the newform is of non-polyhedral type. One knows that the set of primes where the Hecke eigenvalue of $\pi$ is zero has density zero~\cite{Se81}. Thus $\pi$ and $\pi \otimes \chi$, where $\chi$ is a quadratic Hecke character, provide an example of a pair of non-polyhedral cuspidal automorphic representations that agree locally for a set of places of density exactly 1/2. Note that the holomorphy condition is needed here: if $\pi$ were associated to a non-dihedral Maa{\ss} form then the best known upper bounds on the density of places at which the Hecke eigenvalue is zero is 1/2~\cite{Wa13b}.

In the case of Theorem~\ref{t3}, restricting further to cuspidal automorphic representations that are neither dihedral nor icosahedral does not seem to yield better bounds under the current approach. However, examples indicate that it may be reasonable to expect the existence of a lower bound of 15/32 for $\underline{\delta}(S)$, which would be optimal in the tetrahedral case.
For two non-polyhedral cuspidal automorphic representations that are not twist-equivalent, it is natural to conjecture that they agree locally for a set of places of density 0.

We collect all the examples implicit in the discussions above:
\begin{theorem}\label{obs}
We note the existence of the following cuspidal automorphic representations for ${\rm GL}(2)$:
\begin{itemize}
\item (due to Serre~\cite{Se77}) A pair of dihedral representations with $\delta (S) = 1/8$. 
\item A pair of dihedral representations that are non-twist-equivalent and where $\delta (S) = 2/9$.
\item A pair of tetrahedral representations with $\delta (S) = 3/8$.
\item A pair of tetrahedral representations that are non-twist-equivalent and where $\delta (S) = 15/32$.
\item A pair of octahedral representations with $\delta (S) = 1/4$.
\item A pair of icosahedral representations with $\delta (S) = 3/8$.
\item A pair of icosahedral representations that are non-twist-equivalent and where $\delta (S) = 2/5$.
\item A pair of non-polyhedral representations with $\delta (S) = 1/2$.
\end{itemize}
\end{theorem}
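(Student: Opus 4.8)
The plan is to realise the seven polyhedral examples on the Galois side and the last one from a classical newform. Recall that a cuspidal $\pi\in\mathcal{A}_0({\rm GL}_2(\A_F))$ is of dihedral, tetrahedral or octahedral type precisely when it is attached to a continuous irreducible $\rho\colon{\rm Gal}(\overline F/F)\to{\rm GL}_2(\C)$ with solvable projective image (dihedral, $A_4$, or $S_4$), the attachment being Langlands--Tunnell; and, over $\Q$, an odd such $\rho$ with projective image $A_5$ is attached to an icosahedral $\pi$ by Serre's modularity conjecture (Khare--Wintenberger). Under this dictionary $a_v(\pi)={\rm tr}\,\rho({\rm Frob}_v)$ at unramified $v$, so if $\pi,\pi'$ come from $\rho,\rho'$ factoring through a common finite $G={\rm Gal}(M/F)$, Chebotarev gives
\[
\delta\big(S(\pi,\pi')\big)=\frac{\#\{g\in G:\ {\rm tr}\,\rho(g)\neq{\rm tr}\,\rho'(g)\}}{\#G},
\]
an honest (not merely lower) density. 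So each polyhedral bullet reduces to: (a) exhibiting a finite group $G$ with two representations of the prescribed projective types realising the stated fraction; (b) realising $G$ as a Galois group over a suitable number field --- over $\Q$ with $\det\rho$ odd in the icosahedral cases, so that Khare--Wintenberger applies --- which for the solvable $G$ is Shafarevich's theorem; (c) checking $\rho\not\cong\rho'$, resp.\ $\rho\not\cong\rho'\otimes\psi$ for all characters $\psi$, in the ``distinct'', resp.\ ``non-twist-equivalent'', bullets.

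For the ``distinct'' polyhedral bullets I would set $\rho'=\rho\otimes\psi$ with $\psi$ a quadratic character nontrivial on the kernel of the projectivisation of $\rho$ (so $\rho'\not\cong\rho$); the traces then differ exactly when $\psi(g)=-1$ and ${\rm tr}\,\rho(g)\neq0$, which for $\psi$ ``independent'' of the vanishing locus of ${\rm tr}\,\rho$ happens on half the proportion of $G$ on which ${\rm tr}\,\rho\neq0$. Taking $\rho$ to be the tautological two-dimensional representation of the binary tetrahedral group ${\rm SL}_2(\F_3)$ (character vanishing only on the six elements of order $4$, a proportion $\tfrac14$) or of the order-$240$ central extension of $A_5$ that carries an odd icosahedral representation (character again vanishing on a proportion $\tfrac14$), together in each case with an auxiliary linearly disjoint quadratic extension providing $\psi$, yields the tetrahedral and icosahedral $\tfrac38$ examples. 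For the octahedral $\tfrac14$ example one takes $\rho$ tautological on the binary octahedral group $2O$ and $\psi$ the sign character $2O\to S_4\to\{\pm1\}$: the only elements with ${\rm tr}\,\rho\neq0$ and $\psi(g)=-1$ are the twelve of order $8$, a proportion $\tfrac14$. For Serre's dihedral $\tfrac18$ example I would take $K/F$ quadratic with complex conjugation acting by inversion, characters $\chi$ (order $4$) and $\eta$ (order $2$) of ${\rm Gal}(\overline F/K)$ with $\chi^2,\eta$ independent, and $\rho={\rm Ind}_K^F\chi$, $\rho'={\rm Ind}_K^F(\chi\eta)$; the traces vanish off the index-two subgroup and equal $2\,{\rm Re}\,\chi$, resp.\ $\eta\cdot 2\,{\rm Re}\,\chi$, on it, so they differ on the proportion $\tfrac12\cdot\tfrac12\cdot\tfrac12=\tfrac18$. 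Finally, for the non-polyhedral $\tfrac12$ example take $\pi=\pi_f$ for $f$ a holomorphic newform of weight $\geq2$ without complex multiplication --- non-polyhedral because the weight forces infinite Galois image, and non-dihedral by absence of CM --- and $\pi'=\pi\otimes\chi$ for $\chi$ a nontrivial quadratic Hecke character: since $a_p(\pi')=\chi(p)a_p(\pi)$, we have $p\in S$ iff $\chi(p)=-1$ and $a_p(\pi)\neq0$, and Serre~\cite{Se81} gives $\delta(\{p:a_p(\pi)=0\})=0$, whence $\delta(S)=\tfrac12$.

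The ``non-twist-equivalent'' polyhedral bullets need genuinely inequivalent projectivisations, which I would force by fibre products. For the dihedral $\tfrac29$ example, take $G$ the generalised dihedral group of $(\Z/3)^2$ (projective image $S_3$) and $\rho,\rho'$ the two-dimensional representations induced from characters of $(\Z/3)^2$ with distinct kernels --- concretely $\rho={\rm Ind}_K^F\chi$, $\rho'={\rm Ind}_K^F\chi'$ for an imaginary quadratic $K$ with $(\Z/3)^2\subseteq{\rm Cl}(K)$ and cubic ring-class characters with $\{\chi,\chi^{-1}\}\neq\{\chi',\chi'^{-1}\}$ --- and a short count gives $\tfrac{4}{18}=\tfrac29$. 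For the tetrahedral $\tfrac{15}{32}$ example, take two ${\rm SL}_2(\F_3)$-extensions sharing their cyclic cubic subfield, so $G={\rm SL}_2(\F_3)\times_{\Z/3}{\rm SL}_2(\F_3)$, with $\rho,\rho'$ the two pullbacks of the tautological representation; partitioning over $\Z/3$ gives $\tfrac13\cdot\tfrac{13}{32}+\tfrac23\cdot\tfrac12=\tfrac{15}{32}$. For the icosahedral $\tfrac25$ example, take a single odd icosahedral field and let $\rho,\rho'$ be its two genuine two-dimensional representations, which are interchanged by the outer automorphism of $A_5$ and hence non-twist-equivalent (that automorphism is not induced by ${\rm PGL}_2(\C)$-conjugation); their characters differ precisely on the elements of order $5$ and $10$, a proportion $\tfrac25$.

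The step I expect to be the real obstacle is (b)--(c) for the non-twist-equivalent cases: assembling a single number field whose Galois group simultaneously carries $\rho$ and $\rho'$ with inequivalent projectivisations, while retaining exact control of the trace-coincidence proportion, and, for the icosahedral examples, arranging the archimedean behaviour so that Khare--Wintenberger supplies the automorphy. Once the groups, their representations, and their Galois realisations are in hand, the stated densities are routine character-table computations, and automorphy is immediate (Langlands--Tunnell in the solvable cases).
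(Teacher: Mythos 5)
Your proposal is correct and takes essentially the same route as the paper: the same finite groups and character counts (the binary tetrahedral, octahedral and icosahedral groups with their two-dimensional characters, the order-18 fibre product of two copies of $S_3$, the order-192 fibre product of two binary tetrahedral groups over $\Z/3\Z$, Serre's quaternion-type example, and quadratic twists for the remaining pairs), the same use of Chebotarev, and the same automorphy inputs (Langlands--Tunnell in the solvable cases, Khare--Wintenberger/Kisin for odd icosahedral, and Serre's density-zero result for the non-CM weight $\geq 2$ newform). The only deviations are cosmetic: alternative Galois realizations (ring class fields and induced Hecke characters in place of explicit $S_3$-fields and $Q_8\times\{\pm 1\}$) and a projectivization/adjoint criterion for twist-inequivalence where the paper simply exhibits a positive-density set of places with $|a_v(\pi)|\neq |a_v(\pi')|$.
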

These statements will be proved in Section~\ref{s4}.\\

The idea of the proof of Theorems~\ref{t1},~\ref{t2}, and~\ref{t3} relies on the examination of the asymptotic properties of certain suitable Dirichlet series.
 We achieve this by establishing various identities of incomplete $L$-functions and determining their behaviour when real $s \rightarrow 1^+$ ($s$ being the variable in each incomplete $L$-function) by using functoriality and non-vanishing results.
We make use of the work of Gelbart--Jacquet~\cite{GJ78} and Kim--Shahidi~\cite{KS00} on the symmetric second and third powers (respectively) of cuspidal automorphic representations for GL(2) with regard to their existence and possible cuspidality (note however that the symmetric fourth power lift of Kim~\cite{Ki03} is not needed).
We also require the results of Ramakrishnan~\cite{Ra00,Ra04} on the adjoint lift and the automorphic tensor product.
Our approach is related to, though is not a straightforward extension of, work of Ramakrishnan in~\cite{Ra94,Ra97}.

In constructing our examples in Theorem~\ref{obs}, we rely on various cases of the strong Artin conjecture. The first of these cases is implicit in the work of Hecke and Maa{\ss},
and subsequent cases were proved by Langlands~\cite{La80}, Tunnell~\cite{Tu81}, Khare--Wintenberger~\cite{KW109,KW209} and Kisin~\cite{Ki09}. (Note that we do not actually need the full force of Khare--Wintenberger and Kisin, and can instead make do with one of the earlier examples available in the literature; see Section~\ref{s4}).\\

We also note a result of Murty--Rajan~\cite{MR96} that is related to strong multiplicity one. For $\pi_1, \pi_2  \in \mathcal{A}_0({\rm GL}_2(\A_\Q))$ with the property that the $L$-functions of the Rankin--Selberg convolutions of ${\rm Sym}^n (\pi_1)$ and ${\rm Sym}^m (\pi_2)$, for $n =m$ and $n = m +2$, are entire, have suitable functional equations, and satisfy GRH, then $ \# \{p \leq x \mid a_p (\pi_1) = a_p (\pi_2)\} = O\left( x ^{5/6 +\epsilon }\right). $\\

The structure of this paper is as follows. In Section~\ref{s1}, we establish notation and recall relevant theorems on the properties of certain types of $L$-functions associated to automorphic representations. In Section~\ref{s2}, we prove the non-dihedral results. In Section~\ref{s3}, we prove the dihedral cases of Theorems~\ref{t2} and~\ref{t3}. Finally, in Section~\ref{s4} we construct the examples from Theorem~\ref{obs}.

\subsection*{Acknowledgements}
The author would like to thank Dinakar Ramakrishnan for his valuable guidance and useful discussions, as well as Farrell Brumley and Abhishek Saha for their helpful comments on earlier drafts of this paper.

\section{Preliminaries} \label{s1}
We begin by introducing some notation.
Let $F$ be a number field and let $S$ be a set of primes in $F$. Then the \emph{lower and upper Dirichlet densities} of $S$ are
\begin{align*}
\underline{\delta}(S) = \lim_{s \rightarrow 1^+} {\rm inf} \frac{\sum_{\mathfrak{p} \in S}{\rm N}\mathfrak{p}^{-s}}{-\log(s-1)}
\end{align*}
and
\begin{align*}
\overline{\delta}(S) = \lim_{s \rightarrow 1^+} {\rm sup} \frac{\sum_{\mathfrak{p} \in S}{\rm N}\mathfrak{p}^{-s}}{-\log(s-1)},
\end{align*}
respectively.

When the lower and upper Dirichlet densities of $S$ coincide, we say that $S$ has a \emph{Dirichlet density}.

\subsection*{Polyhedral automorphic representations}

We will call a cuspidal automorphic representation $\pi$ for GL(2)/$F$  (where $F$ is a number field) \textit{polyhedral} if it corresponds to a 2-dimensional irreducible complex representation $\rho$ of the Weil group $W_F$. This means that the $L$-functions of the two objects are equal: 
\begin{align*}
L(s, \pi) = L(s, \rho).
\end{align*}
Recall that these 2-dimensional irreducible complex representations fall into four different categories, namely \textit{dihedral, tetrahedral, octahedral}, and \textit{icosahedral} (for example, see the Proposition in Section 4.3 of~\cite{Ge97}). An associated cuspidal automorphic representation then inherits the same nomenclature.

Given a 2-dimensional irreducible complex representation $\rho$, the strong Artin conjecture states that it corresponds to a cuspidal automorphic representation for GL(2). This is known when $\rho$ is dihedral (implicit in the work of Hecke and Maa{\ss}), tetrahedral~\cite{La80}, octahedral~\cite{Tu81}, and odd icosahedral~\cite{KW109,KW209,Ki09} (note that a representation $\rho$ is \textit{odd} if ${\rm det}\rho (c) = -1$, where $c$ is complex conjugation).
Note that there were a number of examples of modular odd icosahedral representations already in the literature before the work of Khare--Wintenberger and Kisin. The first example came from Buhler~\cite{Bu78} (which made key use of the correspondence established by Deligne--Serre~\cite{DS74}), which was followed by, amongst others, the work of Buzzard, Dickinson, Shepherd-Barron, and Taylor~\cite{BDST01}, and Taylor~\cite{Ta03}.

Cuspidal automorphic representations of solvable polyhedral type can be characterised by means of their symmetric power lifts (see~\cite{KS00}):
A cuspidal automorphic representation for GL(2) is said to be \textit{dihedral} if it admits a non-trivial self-twist by a (quadratic) character.
It is \textit{tetrahedral} if it is non-dihedral and its symmetric square admits a non-trivial self-twist by a cubic character.
It is \textit{octahedral} if it is not dihedral or tetrahedral and if its symmetric cube admits a non-trivial self-twist by a quadratic character.
Presently known cases of functoriality are not sufficient to determine whether or not icosahedral automorphic representations admit such a description.\\

Given $\pi \in \mathcal{A}_0 ({\rm GL}_n(\A_F))$ there is the associated $L$-function 
\begin{align*}
 L(s, \pi ) = \prod_{ v } L_v (s, \pi),  
\end{align*}
where, for finite $v$ at which $\pi$ is unramified, we have: 
\begin{align*}
L_v (s, \pi) =& {\rm det} \left(I_n - A( \pi _v ) {\rm N } v ^{-s} \right) ^{-1}  \\
=& \prod_{ j=1 }^{ n}  \left( 1 - \alpha _{j, v }{\rm N } v  ^{-s}\right) ^{-1}  .
\end{align*}
The $L$-function $L(s,\pi)$ converges absolutely for ${\rm Re}(s) > 1$, and furthermore it is non-vanishing on $ {\rm Re}(s) = 1 $~\cite{JS76}.

Now let $T$ be the set of all ramified and infinite places. We define the incomplete $L$-function 
\begin{align*}
  L ^ T (s, \pi ) = \prod_{v \not \in T } L_v (s, \pi),
\end{align*}
and the incomplete Dedekind zeta function
\begin{align*}
  \zeta_F^T (s) = \prod_{v \not \in T }(1- {\rm N }v ^{-s})^{-1}  .
\end{align*}

\subsection*{Rankin--Selberg $L$-functions}
Given automorphic representations $\pi$ and $\pi'$ for ${\rm GL}_n(\A_F)$ and ${\rm GL}_m(\A_F)$, respectively,
we have the Rankin--Selberg $L$-function 
\begin{align*}
L(s, \pi \times \pi') = \prod_{ v } L _v (s, \pi \times \pi'),  
\end{align*}
where for finite $v$ at which both $\pi$ and $\pi'$ are unramified, we have: 
\begin{align*}
   L_v (s, \pi \times \pi') = {\rm det} \left(I _{nm} - \left( A_v(\pi) \otimes A_v(\pi') \right){\rm N } v  ^{-s}\right) ^{-1} .
\end{align*}

This global $L$-function converges absolutely for ${\rm Re}(s) > 1$. When $\pi'$ is dual to $\pi$, one knows that $L(s, \pi \times \pi')$ has a simple pole at $s=1$~\cite{JS81}. Otherwise, it is invertible at $s=1$~\cite{Sh81}.\\

Given  $\pi, \pi' \in \mathcal{A}_0({\rm GL}_2(\A_F))$, by~\cite{Ra00}, one knows of the existence of the automorphic tensor product of $\pi$ and $\pi'$, which is an automorphic representation for ${\rm GL}_4(\A_F)$ that we write as $\pi \boxtimes \pi'$. When $\pi$ and $\pi'$ are both dihedral, there is a cuspidality criterion (from~\cite{Ra00}, and later refined in~\cite{Ra04}) which we will make use of later:

For two dihedral automorphic representations 
$\pi, \pi' \in \mathcal{A}_0({\rm GL}_2(\A_F)) $, 
 the automorphic product $\pi \boxtimes \pi'$ is not cuspidal if and only if $\pi$ and $\pi'$ can be induced from the same quadratic extension $K$ of $F$. In this case, let us write $\pi$, $\pi'$ as $I_K^F (\nu), I_K^F (\mu)$, respectively, where $\nu$ and $\mu$ are Hecke characters for $K$. Then 
\begin{align*}
\pi \boxtimes \pi' = I_K^F (\nu \mu) \boxplus I_K^F (\nu \mu^\tau)
\end{align*}
where $\tau$ is the non-trivial element of ${\rm Gal}(K/F)$ and $\mu^\tau$ signifies precomposition with $\tau$.

\begin{remark}
Note that a given dihedral automorphic representation may be induced from more than one quadratic extension. See also Theorem A of~\cite{PR11}.
\end{remark}

\subsection*{Cuspidality of symmetric powers}
For $\pi \in \mathcal{A}_0({\rm GL}_2(\A_F))$, one knows, by Gelbart--Jacquet~\cite{GJ78} and Kim--Shahidi~\cite{KS00}, that the symmetric second and third power representations are isobaric sums of cuspidal automorphic representations.  One also knows that $ {\rm Sym}^2 \pi $ is cuspidal if and only if $\pi$ is non-dihedral and that $ {\rm Sym}^3 \pi $ is cuspidal if and only if $\pi$ is not dihedral or tetrahedral.

\subsection*{Adjoint lift}
Given $\pi \in \mathcal{A}_0({\rm GL}_2(\A_F))$, there exists the adjoint lift ${\rm Ad}(\pi)$ which is an automorphic representation for ${\rm GL}_3(\A_F)$. This lift is in fact isomorphic to ${\rm Sym}^2 (\pi) \otimes \omega ^{-1}$, where $\omega$ is the central character of $\pi$. The adjoint lift (and thus the symmetric square lift) is cuspidal if and only if $\pi$ is non-dihedral~\cite{JS81}.
We make use of this lift to address possible twist-equivalence for cuspidal automorphic representations for GL(2): Given $\pi = \otimes_v' \pi_v$, $\pi' = \otimes_v' \pi'_v \in \mathcal{A}_0(GL_2(\A_F))$, they are twist-equivalent if
 \begin{align*}
{\rm Ad}(\pi_v) \simeq {\rm Ad}(\pi'_v)
\end{align*}
for almost all $v$~\cite{Ra00}.

\section{Proof of Theorems~\ref{t1} and~\ref{t3}(ii)} \label{s2}

Throughout this section, the cuspidal automorphic representations are assumed to be non-dihedral.\\

For the proofs of Theorems~\ref{t1} and~\ref{t3}(ii) we require the following identities of incomplete $L$-functions:

\begin{lemma}\label{s2lem1} Given $\pi,\pi' \in \mathcal{A}_0({\rm GL}_2(\A_F))$ with (unitary) central characters $\omega$, $\omega'$ (respectively), let $T$ be the set of all the infinite places as well as the finite places at which either $\pi$ or $\pi'$ is ramified.
Then we have
  \begin{align*}
    L^T\left(s, \pi \boxtimes \pi \times \overline{\pi} \boxtimes \overline{\pi} \right) =& L^T\left(s, {\rm Ad} (\pi) \times {\rm Ad}(\pi)  \right)  L^T\left(s, {\rm Ad}(\pi) \right)^2   \zeta_F^T(s) \\
    L^T\left(s, \pi \boxtimes \pi \times \overline{\pi} \boxtimes \overline{\pi'}\right) =& L^T\left(s, (\overline{\omega}\otimes \rm{Sym}^3 (\pi)) \times \overline{\pi'} \right) L^T\left(s, \pi \times \overline{\pi'} \right) ^2\\
    L^T\left(s, \pi \boxtimes \pi \times \overline{\pi'} \boxtimes \overline{\pi'}\right) =& L^T \left(s, {\rm Ad}(\pi) \times ({\rm Ad}(\pi') \otimes \overline{\omega}\omega') \right)
L^T\left(s, \omega \overline{\omega'} \otimes \rm{Ad} (\pi) \right) \\
& \cdot  L^T\left(s, \omega \overline{\omega'} \otimes \rm{Ad} (\pi') \right)  L^T \left(s, \omega \overline{\omega '}\right) \\
    L^T\left(s, \pi \boxtimes \overline{\pi} \times \pi' \boxtimes \overline{\pi'}\right) =& L^T \left(s, {\rm Ad}(\pi) \times {\rm Ad}(\pi') \right) L^T\left(s, {\rm Ad}(\pi)\right) L^T\left(s, {\rm Ad}(\pi')\right)  \zeta^T_F(s) .
  \end{align*}
  \end{lemma}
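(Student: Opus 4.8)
The plan is to reduce each of the four identities to a comparison of Euler factors at a single unramified place, and then to a short decomposition of finite-dimensional representations of ${\rm GL}_2(\C)$. Since $T$ consists exactly of the archimedean places together with the finite places at which $\pi$ or $\pi'$ ramifies, every finite $v\notin T$ is unramified for $\pi$ and $\pi'$ and hence for all the functorial transfers (${\rm Ad}$, ${\rm Sym}^2$, ${\rm Sym}^3$, $\boxtimes$) and Rankin--Selberg convolutions that occur; at such a $v$ each Euler factor is a symmetric function of the Satake parameters $\{\alpha_{1,v},\alpha_{2,v}\}$ of $\pi$ and $\{\alpha'_{1,v},\alpha'_{2,v}\}$ of $\pi'$. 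Two incomplete $L$-functions over the same place set $T$ therefore agree as soon as these Satake data match place by place, so I may freely regroup iterated $\boxtimes$/Rankin--Selberg constructions (the Satake parameter of $\sigma\boxtimes\tau$ is just the tensor product of those of $\sigma$ and $\tau$) and use the multiplicativity $L^T(s,(\sigma_1\boxplus\sigma_2)\times\tau)=L^T(s,\sigma_1\times\tau)\,L^T(s,\sigma_2\times\tau)$ in each argument.

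First I would record the relevant isobaric decompositions, each immediate on unramified Satake parameters (equivalently, a decomposition of ${\rm std}^{\otimes 2}$, ${\rm std}\otimes{\rm std}^{\vee}$, or ${\rm Sym}^2{\rm std}\otimes{\rm std}$ for ${\rm GL}_2(\C)$, combined with ${\rm Sym}^2(\pi)={\rm Ad}(\pi)\otimes\omega$, ${\rm Ad}(\pi)^{\vee}={\rm Ad}(\pi)$, and $\overline{\pi}=\pi^{\vee}=\pi\otimes\omega^{-1}$ since $\omega$ is unitary):
\begin{align*}
\pi\boxtimes\pi&\cong{\rm Sym}^2(\pi)\boxplus\omega\cong\bigl({\rm Ad}(\pi)\otimes\omega\bigr)\boxplus\omega, & \overline{\pi}\boxtimes\overline{\pi}&\cong\bigl({\rm Ad}(\pi)\otimes\overline{\omega}\bigr)\boxplus\overline{\omega},\\
\pi\boxtimes\overline{\pi}&\cong{\rm Ad}(\pi)\boxplus\mathbf 1, & {\rm Sym}^2(\pi)\otimes\overline{\pi}&\cong\bigl(\overline{\omega}\otimes{\rm Sym}^3(\pi)\bigr)\boxplus\pi,
\end{align*}
together with $\omega\otimes\overline{\pi}\cong\pi$ and the analogues for $\pi'$; note also $\omega\overline{\omega}=\mathbf 1$ and $L^T(s,\mathbf 1\times\mathbf 1)=\zeta_F^T(s)$.

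The four identities then follow by substitution. For the fourth line, substitute $\pi\boxtimes\overline{\pi}\cong{\rm Ad}(\pi)\boxplus\mathbf 1$ and $\pi'\boxtimes\overline{\pi'}\cong{\rm Ad}(\pi')\boxplus\mathbf 1$ and expand the Rankin--Selberg product into its four pieces. For the first line, regroup $(\pi\boxtimes\pi)\times(\overline{\pi}\boxtimes\overline{\pi})$ as $(\pi\boxtimes\overline{\pi})\times(\pi\boxtimes\overline{\pi})$ and reuse the fourth-line computation with $\pi'=\pi$ (alternatively, substitute $\pi\boxtimes\pi\cong({\rm Ad}(\pi)\otimes\omega)\boxplus\omega$ and $\overline{\pi}\boxtimes\overline{\pi}\cong({\rm Ad}(\pi)\otimes\overline{\omega})\boxplus\overline{\omega}$, expand, and cancel the twists using $\omega\overline{\omega}=\mathbf 1$). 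For the third line, substitute $\pi\boxtimes\pi\cong({\rm Ad}(\pi)\otimes\omega)\boxplus\omega$ and $\overline{\pi'}\boxtimes\overline{\pi'}\cong({\rm Ad}(\pi')\otimes\overline{\omega'})\boxplus\overline{\omega'}$, expand into four pieces, and collect the twisting characters (read off from $\omega$ and $\overline{\omega'}$) to obtain the right-hand side. For the second line, write $\pi\boxtimes\pi\cong{\rm Sym}^2(\pi)\boxplus\omega$, so the left side is $L^T(s,{\rm Sym}^2(\pi)\times(\overline{\pi}\boxtimes\overline{\pi'}))\,L^T(s,\omega\times(\overline{\pi}\boxtimes\overline{\pi'}))$; use $\omega\otimes\overline{\pi}\cong\pi$ in the second factor to get $L^T(s,\pi\times\overline{\pi'})$, and contract ${\rm Sym}^2(\pi)$ against $\overline{\pi}$ in the first factor via ${\rm Sym}^2(\pi)\otimes\overline{\pi}\cong(\overline{\omega}\otimes{\rm Sym}^3(\pi))\boxplus\pi$ (regrouping $(\sigma\boxtimes\overline{\pi})\times\overline{\pi'}$) to get $L^T(s,(\overline{\omega}\otimes{\rm Sym}^3(\pi))\times\overline{\pi'})\,L^T(s,\pi\times\overline{\pi'})$, which together yield the claimed factorization.

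None of these steps presents a genuine obstruction: once the decompositions above are in place the rest is bookkeeping. The two points that require care are (i) attaching each determinantal twist — the factors of $\omega$, $\overline{\omega}$, $\omega\overline{\omega'}$ — to the correct Rankin--Selberg argument; these twists originate from $\wedge^2{\rm std}=\det$ in ${\rm std}^{\otimes 2}={\rm Sym}^2\oplus\det$ and from $\pi^{\vee}=\pi\otimes\omega^{-1}$, and tracking them is where the only real chance of a sign slip lies; and (ii) justifying the free regrouping of iterated $\boxtimes$ and Rankin--Selberg operations, which is legitimate precisely because the asserted equalities are of \emph{incomplete} $L$-functions and so can be checked Euler factor by Euler factor at unramified $v$. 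In particular no functoriality beyond what is recalled in Section~\ref{s1} is needed — the existence of $\boxtimes$ for ${\rm GL}_2\times{\rm GL}_2$ and of ${\rm Ad}(\pi)$, and the Gelbart--Jacquet and Kim--Shahidi structure of ${\rm Sym}^2(\pi)$ and ${\rm Sym}^3(\pi)$ — and cuspidality of the intermediate transfers plays no role.
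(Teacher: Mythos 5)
Your proposal is correct and follows essentially the same route as the paper: both arguments reduce the identities to Clebsch--Gordan decompositions of tensor products of two-dimensional representations checked Euler factor by Euler factor at the unramified places outside $T$, using $\overline{\pi_v}\simeq\widetilde{\pi_v}$ and the twist by the central character; the paper merely works one case (the third identity) explicitly where you spell out all four via isobaric decompositions such as $\pi\boxtimes\overline{\pi}\simeq{\rm Ad}(\pi)\boxplus\mathbf{1}$ and ${\rm Sym}^2(\pi)\otimes\overline{\pi}\simeq(\overline{\omega}\otimes{\rm Sym}^3(\pi))\boxplus\pi$. Your closing remarks that the regrouping is legitimate because the statement concerns incomplete $L$-functions, and that cuspidality of the transfers is irrelevant here, are also accurate.
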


\begin{proof}
This follows from the Clebsch--Gordon decomposition of tensor powers of two-dimensional representations.
Here and in later proofs, we use the fact that the contragredient representation $\widetilde{\pi_v}$ is isomorphic to the complex conjugate representation $\overline{\pi_v}$~\cite{GK75}.

By way of example, we elaborate on the case of $L^T\left(s, \pi \boxtimes \pi \times \overline{\pi'} \boxtimes \overline{\pi'}\right)$. At a finite place $v$ of $F$ where both $\pi$ and $\pi'$ are unramified, we represent the associated Langlands conjugacy classes $A_v(\pi)$ and $A_v(\pi')$ as ${\rm diag}\{\alpha,\beta\}$ and ${\rm diag}\{\gamma,\delta\}$, respectively.
Consider the tensor product $A_v(\pi)\otimes A_v(\pi)$, which we express as
\begin{align*}
     \left( \begin{array}{cc}
     \alpha& \\
     &\beta
     \end{array} \right)
\otimes
     \left( \begin{array}{cc}
     \alpha& \\
     &\beta
     \end{array} \right)
\end{align*}
which is equivalent to
\begin{align*}
\alpha \beta 
\otimes
\left(
     \left( \begin{array}{ccc}
     \alpha / \beta&  &  \\
     & 1 &  \\
     &  & \beta / \alpha
     \end{array} \right)
\oplus
1
\right)
\end{align*}
and we observe that this is a representative of $A_v(\omega \otimes ({\rm Ad}\pi \boxplus 1))$.

For $A_v(\overline{\pi'})\otimes A_v(\overline{\pi'})$, we have 
\begin{align*}
\overline{\gamma \delta}
\otimes
\left(
     \left( \begin{array}{ccc}
     \overline{\gamma}/ \overline{\delta}&  &  \\
     & 1 &  \\
     &  & \overline{\delta} / \overline{\gamma}
     \end{array} \right)
\oplus 1
\right)
\end{align*}
which is a representative of $A_v(\overline{\omega'}\otimes ({\rm Ad}\pi' \boxplus 1))$.

Therefore 
\begin{align*}
A_v(\pi)\otimes A_v(\pi)\otimes A_v(\overline{\pi'})\otimes A_v(\overline{\pi'})
\end{align*}
can be expressed as 
\begin{align*}
A_v\left(\omega \overline{\omega'} \otimes \left(({\rm Ad}\pi \times {\rm Ad}\pi') \boxplus {\rm Ad}\pi \boxplus {\rm Ad}\pi' \boxplus 1\right)\right).
\end{align*}
Observing that
\begin{align*}
& L_v\left(s, \pi \boxtimes \pi \times \overline{\pi'} \boxtimes \overline{\pi'}\right) \\
=& {\rm det}\left(I_{16} - \frac{A_v(\pi)\otimes A_v(\pi)\otimes A_v(\overline{\pi'})\otimes A_v(\overline{\pi'})}{Nv^s}\right)^{-1}\\
=& {\rm det}\left(I_{9} - \frac{\omega \overline{\omega'} \cdot A_v({\rm Ad}\pi)\otimes A_v({\rm Ad}\pi')}{Nv^s}\right)^{-1}
{\rm det}\left(I_{3} - \frac{\omega \overline{\omega'} \cdot A_v({\rm Ad}\pi)} {Nv^s}\right)^{-1}\\
&\cdot {\rm det}\left(I_{3} - \frac{\omega \overline{\omega'} \cdot  A_v({\rm Ad}\pi')}{Nv^s}\right)^{-1}
{\rm det}\left(I_{1} - \frac{\omega \overline{\omega'}}{Nv^s}\right)^{-1}\\
=& L_v(s, \omega \overline{\omega'} \otimes {\rm Ad}\pi \times {\rm Ad}\pi')
L_v(s, \omega \overline{\omega'} \otimes {\rm Ad}\pi)
L_v(s, \omega \overline{\omega'} \otimes {\rm Ad}\pi')
L_v(s, \omega \overline{\omega'})
\end{align*}
we obtain our $L$-function identity.
\end{proof}

\begin{remark}
The purpose of the lemma above is to establish the asymptotic behaviour of various Dirichlet series as real $s \rightarrow 1^+$.
For example, using~\cite{BB11} we observe that, as real $s \rightarrow 1^+$,
\begin{align*}
\log L^T(s, \pi \boxtimes \overline{\pi} \times \pi' \boxtimes \overline{\pi'}) =
\sum \frac{|a_v (\pi)|^2 |a_v (\pi')|^2 }{{\rm N}v ^s} + O\left(1\right),
\end{align*}
and similarly for the other incomplete $L$-functions on the left-hand side of the equations in Lemma~\ref{s2lem1}. 

\end{remark}

\noindent \textbf{Proof of Theorem~\ref{t1}}\\
Let us begin by fixing both $\pi$ and $\pi'$ to be non-tetrahedral (as well as non-dihedral, as mentioned at the beginning of this section) so that we can assume that the symmetric cubes of $\pi$ and $\pi'$ are both cuspidal. At finite places $v$ where both $\pi$ and $\pi'$ are unramified, we denote the trace of $A_v(\pi)$ as $a_v$ and the trace of $A_v(\pi')$ as $b_v$.

By considering the behaviour of the incomplete $L$-functions in Lemma~\ref{s2lem1}, and using the results stated in the previous section on Rankin--Selberg $L$-functions, symmetric powers, and adjoint lifts, we obtain
\begin{align*}
\sum \frac{a_v^i \overline{a_v}^j b_v^k \overline{b_v}^l}{{\rm N}v ^s}= &k(i,j,k,l)\cdot \log\left(\frac{1}{s-1}\right) +o\left(\log\left(\frac{1}{s-1}\right)\right)
\end{align*}
as real $s \rightarrow 1 ^+$, where $k (i,j,k,l)$ is a non-negative integer such that 
\begin{align*}
k (i,j,k,l) \leq 
     \left\{ \begin{array}{cccl}
     0& \text{ for }&(i,j,k,l)=&(2,1,0,1), (0,1,2,1),(1,2,1,0),\\
&&& \text{ and }(1,0,1,2)\\
     2& \text{ for }&(i,j,k,l)=&(1,1,1,1),(2,2,0,0),(0,0,2,2),(2,0,0,2)\\
&&& \text{ and }(0,2,2,0).
     \end{array} \right.
\end{align*}

Note that we are using the fact that the logarithms of incomplete $L$-functions such as those in Lemma~\ref{s2lem1} are asymptotically equivalent (as real $s \rightarrow 1^+$) to the Dirichlet series above.\\

Let $C= C _S $ be the characteristic function of the set $S = \{ v \mid a_v \neq b_v  \}$. We have, for real $s > 1$:
\begin{align}
\sum_{v } \frac{|a _v - b_v|^2 C (v)  }{ {\rm N }v ^{s} } 
\leq  \left(  \sum_{v } \frac{|a _v -b_v|^4 }{ {\rm N }v ^{s} }\right)^{1/2} \left(   \sum_{v \in S} \frac{1 }{ {\rm N }v ^{s} } \right)^{1/2}  \label{s2ineq1}
\end{align}
where the inequality above arises from applying Cauchy--Schwarz.

We divide inequality~(\ref{s2ineq1}) by $\log \left( 1 / (s-1) \right)$, take the limit inferior as $s \rightarrow 1 ^+$, and examine the asymptotic properties of the series. We obtain
\begin{align*}  
  2 &\leq (16) ^{1/2} \cdot \underline{\delta}(S)^{1/2} 
\end{align*}
which gives  
\begin{align*}
  \frac{1}{4} &\leq \underline{\delta}(S).
\end{align*}

Now let us consider the case when at least one of the cuspidal automorphic representations, say $\pi$, is tetrahedral. We apply Theorem 2.2.2 of~\cite{KS02} which states that
\begin{align*}
 {\rm Sym}^3 (\pi) \otimes \omega ^{-1} (\pi) \simeq (\pi \otimes \nu)\boxplus (\pi \otimes \nu^2)
\end{align*}
where $\omega$ is the central character of $\pi$ and $\nu$ is a (cubic) Hecke character that satisfies ${\rm Sym}^2 (\pi) \simeq {\rm Sym}^2 (\pi) \otimes \nu$.
Since
\begin{align*}
L^T(s, \overline{\omega}\otimes {\rm Sym}^3 (\pi) \times \overline{\pi'}) =L^T(s, (\pi \otimes \nu) \times \overline{\pi'}) L^T(s, (\pi \otimes \nu^2) \times \overline{\pi'}),
\end{align*}
the $L$-function on the left-hand side only has a (simple) pole at $s=1$ when $\pi' \simeq \pi \otimes \nu$ or $\pi \otimes \nu^2$.
If this is the case, then $k(i,j,k,l)=1$ for $(i,j,k,l)=(2,1,0,1), (0,1,2,1)$, $(1,2,1,0)$, and $(1,0,1,2)$. This gives us a lower bound of 1/2 for the density of places at which $a_v(\pi) \neq a_v(\pi')$. Thus the lower bound of 1/4 still holds.\\  \qed 

\noindent \textbf{Proof of Theorem~\ref{t3}(ii)}\\
Here the non-dihedral automorphic representations $\pi$ and $\pi'$ are non-twist-equivalent. 

Following a similar approach to before, we obtain
\begin{align*}
\sum \frac{a_v^i \overline{a_v}^j b_v^k \overline{b_v}^l}{{\rm N}v ^s}= &k(i,j,k,l)\cdot \log\left(\frac{1}{s-1}\right) +o\left(\log\left(\frac{1}{s-1}\right)\right)
\end{align*}
as real $s \rightarrow 1 ^+$, where $k(i,j,k,l)$ is a non-negative integer such that
\begin{align*}
k (i,j,k,l) \leq 
     \left\{ \begin{array}{ccl}
     0& \text{ for }&(i,j,k,l)=(2,1,0,1), (0,1,2,1), (1,2,1,0),\text{ and }(1,0,1,2)   \\
     1& \text{ for }&(i,j,k,l)=(1,1,1,1)   \\
     2& \text{ for }&(i,j,k,l)=(2,2,0,0)\text{ and }(0,0,2,2)\\
     c& \text{ for }&(i,j,k,l)=(2,0,0,2)\text{ and }(0,2,2,0),
     \end{array} \right.
\end{align*}
with $c=c (\omega,\omega ')$ equal to zero if $\omega \neq \omega '$ and equal to one otherwise.

Again we divide inequality~(\ref{s2ineq1}) by $\log \left( 1 / (s-1) \right)$, take the limit inferior as $s \rightarrow 1 ^+$, and examine the asymptotic properties of the series given our condition of non-twist-equivalence. We obtain
\begin{align*}  
  2 &\leq (8 +2c) ^{1/2} \cdot \underline{\delta}(S)^{1/2} 
\end{align*}
which implies
\begin{align*}  
 \frac{2}{4+c} &\leq \underline{\delta}(S).
\end{align*}

Thus $\underline{\delta}(S)$ is greater or equal to 2/5 if $\omega = \omega '$, and 1/2 otherwise.\\ \qed

\section{Proof of Theorems~\ref{t2} and~\ref{t3}(i)} \label{s3}

In this section, any pair of cuspidal automorphic representations $\pi$,$\pi'$ will always be non-twist-equivalent.\\

Since the non-dihedral cases have been covered by the proof of Theorem~\ref{t3}(ii) in the previous section, we now need to consider the situation where at least one out of the pair of cuspidal automorphic representations in question is dihedral.
Let us briefly recall our notation for dihedral automorphic representations. Given a dihedral automorphic representation $\pi$ for ${\rm GL}_2(\A_F)$, one knows that it can be induced from a Hecke character $\mu$ of $K$, where $K$ is a suitable quadratic extension of $F$. Then we will denote it as $I_K^F(\mu)$, or $I(\mu)$ if $K$ and $F$ are understood.\\

We split the proof into four cases. The first three cases will account for the situation where both cuspidal automorphic representations are dihedral, and we distinguish between these three cases based on whether a certain property P holds for none, one, or both of the representations. The fourth case addresses the situation when exactly one of the cuspidal automorphic representations is dihedral.

We shall say that a cuspidal automorphic representation $\pi$ has property P if it is dihedral and, furthermore, that if we express $\pi$ as $I_K^F(\mu)$, then the Hecke character $\mu / \mu^{\tau}$ is invariant under $\tau$, the non-trivial element of ${\rm Gal}(K / F)$. Such $\pi$ can be induced from exactly three different quadratic extensions of $F$.\\

We begin with the case of two dihedral automorphic representations that do not satisfy property P.
Here, the general strategy of the proof follows as in the previous section, though the approach to the analysis needs to be altered. The asymptotic properties of the $L$-functions as real $s \rightarrow 1^+$ may be different from before, in that we expect a higher order pole in certain cases. By way of example, we will discuss two particular incomplete $L$-functions in detail, that of $L^T (s, \pi \boxtimes \pi \times \overline{\pi'}\boxtimes \overline{\pi'})$ and $L^T (s, \pi \boxtimes \pi \times \overline{\pi}\boxtimes \overline{\pi})$.

With regard to the first incomplete $L$-function, we shall explain why it has a pole of order at most two.
If $\pi$, $\pi'$ cannot be induced from the same quadratic extension, then the automorphic tensor product $\pi \boxtimes \overline{\pi'}$ is cuspidal, and so 
$L^T(s, \pi \boxtimes \overline{\pi'} \times \pi \boxtimes \overline{\pi'})$
has at most a pole of order one.
If $\pi, \pi'$ can be induced from the same quadratic extension, then let us denote this extension as $K$. We write $\pi =I_K^F(\nu)=I(\nu)$ and $\pi' =I_K^F(\mu)=I(\mu)$, where $\mu, \nu$ are Hecke characters of $K$. We obtain
\begin{align*}
\pi \boxtimes \overline{\pi'} \simeq I(\nu \overline{\mu}) \boxplus I(\nu \overline{\mu}^\tau)
\end{align*}
and thus 
\begin{align*}
L^T(s, \pi \boxtimes \overline{\pi'} \times \pi \boxtimes \overline{\pi'})
=& L^T(s, I(\nu \overline{\mu}) \times  I(\nu \overline{\mu}))
 L^T(s, I(\nu \overline{\mu}) \times  I(\nu \overline{\mu}^\tau))^2\\
& \cdot L^T(s, I(\nu \overline{\mu}^\tau) \times  I(\nu \overline{\mu}^\tau)).
\end{align*}
If neither $I(\nu \mu)$ nor $I(\nu \mu ^\tau)$ is self-dual, then the right-hand side has a pole of order at most two.
Now let us consider the case when exactly one of them is self-dual. We note that the middle $L$-function on the right-hand side will contribute to the order of the pole of the overall expression if and only if $I(\nu \mu)$ and $I(\nu \mu ^\tau)$ are dual. However, this is not possible given that one is self-dual and the other is not.
Lastly, we have the case when both are self-dual, in which case the middle expression contributes to the order of the pole if and only if $I(\nu \mu) \simeq I(\nu \mu ^\tau)$. This means that either $\nu =\nu ^\tau$ or $\mu =\mu ^\tau$, implying that either $I(\nu)$ or $I(\mu)$ is not cuspidal, in contradiction to our original assumption.

Note that in the analysis above, the issue of whether $\nu / \nu^\tau$ (or $\mu / \mu^\tau$) is invariant under ${\rm Gal}(K/ F)$ did not arise. This is not the case for the next incomplete $L$-function that we address, where we need to make the assumption that both  $\nu / \nu^\tau$ and $\mu / \mu^\tau$ are \textit{not} ${\rm Gal}(K/ F)$-invariant.

We now consider $L^T(s, \pi \boxtimes \pi \times \overline{\pi} \boxtimes \overline{\pi})$ and show that, under our assumption, it can only have a pole of order at most three. Note that
\begin{align*}
\pi \boxtimes \overline{\pi} &\simeq I(1) \boxplus I(\nu / \nu^\tau)\\
& \simeq 1 \boxplus \chi \boxplus I(\nu / \nu^\tau),
 \end{align*}
where $\chi$ is the quadratic Hecke character associated to $K / F$. Since we have assumed that $\nu / \nu^\tau$ is not ${\rm Gal}(K / F)$-invariant, we know that $I(\nu / \nu^\tau)$ is cuspidal. Thus 
\begin{align*}
L^T(s, \pi \boxtimes \overline{\pi} \times \pi \boxtimes \overline{\pi})=&\zeta^T(s)L^T(s, \chi)^2 L^T(s, \chi \times \chi)L^T(s, I(\nu / \nu^\tau))\\
&\cdot L^T(s, \chi \times I(\nu / \nu^\tau))L^T(s, I(\nu / \nu^\tau) \times I(\nu / \nu^\tau)).
\end{align*}
On the right-hand side, the first and third $L$-functions have simple poles at $s=1$. The last $L$-function has a simple pole at $s=1$ if $I(\nu / \nu^\tau)$ is self-dual, otherwise it is invertible at that point. The remaining $L$-functions on the right-hand side are all invertible at $s=1$. Therefore the $L$-function on the left-hand side has a pole of order at most three.

Taking a similar approach to the analysis of the remaining incomplete $L$-functions, we obtain
\begin{align*}
\sum \frac{a_v^i \overline{a_v}^j b_v^k \overline{b_v}^l}{{\rm N}v ^s}= &k(i,j,k,l)\cdot \log\left(\frac{1}{s-1}\right) +o\left(\log\left(\frac{1}{s-1}\right)\right)
\end{align*}
as real $s \rightarrow 1 ^+$, where $k (i,j,k,l)$ is a non-negative integer such that 
\begin{align*}
k (i,j,k,l) \leq 
     \left\{ \begin{array}{ccll}
     1& \text{ for }&(i,j,k,l)=&(2,1,0,1), (0,1,2,1), (1,2,1,0),\\
     &&&\text{ and }(1,0,1,2)   \\
     2& \text{ for }&(i,j,k,l)=&(1,1,1,1), (2,0,0,2),\text{ and }(0,2,2,0)\\
     3& \text{ for }&(i,j,k,l)=&(2,2,0,0)\text{ and }(0,0,2,2).
     \end{array} \right.
\end{align*}

Continuing with our proof, we divide inequality~(\ref{s2ineq1}) by $\log \left( 1 / (s-1) \right)$, take the limit inferior as $s \rightarrow 1 ^+$, and examine the asymptotic properties of the series given our non-twist-equivalent condition. We obtain
\begin{align*}  
  2 &\leq (18 ) ^{1/2} \cdot \underline{\delta}(S)^{1/2} 
\end{align*}
which gives
\begin{align*}  
\frac{2}{9} & \leq \underline{\delta}(S).
\end{align*}
Thus $\pi$ and $\pi'$ differ locally at a set of places of lower Dirichlet density at least 2/9.\\

We now consider the case where both dihedral automorphic representations satisfy property P.
Again we write $\pi =I_K^F(\nu)$ and $\pi' =I_K^F(\mu)$, where $\mu, \nu$ are Hecke characters for $K$; this time both $\nu / \nu^\tau$ and $\mu / \mu^\tau$ are ${\rm Gal}(K/ F)$-invariant.

Note that
\begin{align*}
\pi \boxtimes \overline{\pi} \simeq & I(1) \boxplus I\left(\frac{\nu}{\nu^\tau}\right)\\
\simeq & 1 \boxplus \chi \boxplus \frac{\nu}{\nu^\tau} \boxplus \left(\frac{\nu}{\nu^\tau} \cdot \chi \right).
\end{align*}
We also know that  $\pi \boxtimes \overline{\pi} \simeq 1 \boxplus {\rm Ad}\pi$, so 
\begin{align*}
{\rm Ad}\pi \simeq \chi \boxplus \frac{\nu}{\nu^\tau} \boxplus (\frac{\nu}{\nu^\tau} \cdot \chi )
\end{align*}
and similarly, 
\begin{align*}
{\rm Ad}\pi' \simeq \chi \boxplus \frac{\mu}{\mu^\tau} \boxplus (\frac{\mu}{\mu^\tau} \cdot \chi ).
\end{align*}

We now want to consider the lower Dirichlet density of the set 
\begin{align*}
S:=\{v \mid a_v({\rm Ad}\pi) \neq a_v({\rm Ad}\pi') \},
\end{align*}
as this will give us a lower bound for the lower Dirichlet density of the set 
\begin{align*}
S':=\{v \mid a_v(\pi) \neq a_v(\pi') \}.
\end{align*}
We first determine the decompositions of the relevant incomplete $L$-functions.

\begin{lemma}
Given $\pi =I_K^F(\nu)$ and $\pi' =I_K^F(\mu)$, where $\mu$ and $\nu$ are Hecke characters for $K$ such that both $\nu / \nu^\tau$ and $\mu / \mu^\tau$ are ${\rm Gal}(K/F)$-invariant, we have 
\begin{align*}
 L^T\left(s, {\rm Ad}\pi \times {\rm Ad}\pi\right)=&\zeta^T (s)
 L^T\left(s, \frac{\nu}{\nu^\tau}\cdot \chi\right)^2
 L^T\left(s,\frac{\nu}{\nu^\tau}\right)^2
 L^T\left(s, \frac{\nu}{\nu^\tau} \cdot \frac{\nu}{\nu^\tau}\right)^2\\
& \cdot L^T\left(s, \frac{\nu}{\nu^\tau} \cdot \frac{\nu}{\nu^\tau} \cdot \chi\right)^2\\
 L^T\left(s, {\rm Ad}\pi \times {\rm Ad}\pi'\right)=&\zeta ^T (s)
 L^T\left(s, \frac{\nu}{\nu^\tau}\cdot \chi\right)
 L^T\left(s, \frac{\nu}{\nu^\tau}\right)
 L^T\left(s, \frac{\mu}{\mu^\tau}\cdot \chi\right)\\
& \cdot L^T\left(s,\frac{\nu}{\nu^\tau} \cdot \frac{\mu}{\mu^\tau}\right)^2
 L^T\left(s, \frac{\nu}{\nu^\tau} \cdot \frac{\mu}{\mu^\tau} \cdot \chi\right)^2
 L^T\left(s, \frac{\mu}{\mu^\tau}\right)\\
 L^T\left(s, {\rm Ad}\pi' \times {\rm Ad}\pi'\right)=&\zeta^T (s)
 L^T\left(s, \frac{\mu}{\mu^\tau}\cdot \chi\right)^2
 L^T\left(s,\frac{\mu}{\mu^\tau}\right)^2
 L^T\left(s, \frac{\mu}{\mu^\tau} \cdot \frac{\mu}{\mu^\tau}\right)^2\\
& \cdot L^T\left(s, \frac{\mu}{\mu^\tau} \cdot \frac{\mu}{\mu^\tau} \cdot \chi\right)^2.\\
\end{align*}
\end{lemma}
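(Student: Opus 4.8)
The plan is to prove each of the three displayed identities by decomposing the adjoint lifts into isobaric sums of Hecke characters and then applying the Clebsch--Gordan / multiplicativity of Rankin--Selberg $L$-functions at each unramified place, exactly in the style of Lemma~\ref{s2lem1}. From the computation already carried out in the text one has
\[
{\rm Ad}\pi \simeq \chi \boxplus \frac{\nu}{\nu^\tau} \boxplus \left(\frac{\nu}{\nu^\tau}\cdot\chi\right),
\qquad
{\rm Ad}\pi' \simeq \chi \boxplus \frac{\mu}{\mu^\tau} \boxplus \left(\frac{\mu}{\mu^\tau}\cdot\chi\right),
\]
where $\chi$ is the quadratic character cutting out $K/F$. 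First I would record this as the starting point, noting that the ${\rm Gal}(K/F)$-invariance hypothesis on $\nu/\nu^\tau$ and $\mu/\mu^\tau$ is precisely what guarantees that each of these three summands descends to a Hecke character of $F$ (rather than an induced two-dimensional piece), so that ${\rm Ad}\pi$ and ${\rm Ad}\pi'$ are isobaric sums of three GL(1) automorphic representations.

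Next I would carry out the Rankin--Selberg product. For any two isobaric sums $\sigma = \boxplus_i \eta_i$ and $\sigma' = \boxplus_j \eta_j'$ of Hecke characters one has $L^T(s,\sigma\times\sigma') = \prod_{i,j} L^T(s,\eta_i\eta_j')$, so each of the three identities is obtained by expanding a $3\times 3$ grid of products of Hecke characters. For ${\rm Ad}\pi\times{\rm Ad}\pi$ the nine factors are $\chi^2=1$ (contributing $\zeta_F^T(s)$ since $\chi$ is quadratic), the cross terms $\chi\cdot(\nu/\nu^\tau)$ and $\chi\cdot(\nu/\nu^\tau)\chi = \nu/\nu^\tau$ each appearing twice, the term $(\nu/\nu^\tau)^2$ twice, the term $(\nu/\nu^\tau)\chi\cdot(\nu/\nu^\tau) = (\nu/\nu^\tau)^2\chi$ twice, and $(\nu/\nu^\tau)^2\chi\cdot\chi$... one has to be a little careful grouping these, but collecting equal characters (using $\chi^2=1$ repeatedly) yields exactly $\zeta^T(s)\,L^T(s,\tfrac{\nu}{\nu^\tau}\chi)^2 L^T(s,\tfrac{\nu}{\nu^\tau})^2 L^T(s,(\tfrac{\nu}{\nu^\tau})^2)^2 L^T(s,(\tfrac{\nu}{\nu^\tau})^2\chi)^2$ as claimed. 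The third identity is the same computation with $\mu$ in place of $\nu$, and the middle identity ${\rm Ad}\pi\times{\rm Ad}\pi'$ is the mixed $3\times3$ grid, giving $\zeta^T(s)$ from $\chi\cdot\chi$, single factors $L^T(s,\tfrac{\nu}{\nu^\tau}\chi)$, $L^T(s,\tfrac{\nu}{\nu^\tau})$, $L^T(s,\tfrac{\mu}{\mu^\tau}\chi)$, $L^T(s,\tfrac{\mu}{\mu^\tau})$ from the terms pairing one $\chi$ with one non-$\chi$ character, and the squared factors $L^T(s,\tfrac{\nu}{\nu^\tau}\tfrac{\mu}{\mu^\tau})^2$ and $L^T(s,\tfrac{\nu}{\nu^\tau}\tfrac{\mu}{\mu^\tau}\chi)^2$ from the two ways of pairing a $\nu$-term with a $\mu$-term.

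I would present this as: (i) state the isobaric decompositions and the descent-to-GL(1) remark; (ii) invoke multiplicativity of incomplete Rankin--Selberg $L$-functions over isobaric sums (equivalently, work place-by-place at unramified $v$ with the tensor product of the corresponding diagonal matrices, as in the proof of Lemma~\ref{s2lem1}); (iii) do the bookkeeping of the $3\times 3$ grids, using $\chi^2 = 1$ to merge terms. The main obstacle — really the only delicate point — is the grid bookkeeping: one must track which of the nine products coincide after applying $\chi^2=1$, and in particular verify that no unexpected cancellation or extra coincidence occurs among $1$, $\chi$, $\nu/\nu^\tau$, $(\nu/\nu^\tau)\chi$, $(\nu/\nu^\tau)^2$, $(\nu/\nu^\tau)^2\chi$ (and their $\mu$-analogues), since the stated identities implicitly assume these are the distinct characters appearing. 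This is routine but error-prone, so I would organize it by listing, for each identity, the multiset of nine characters and then grouping; everything else follows formally from the Euler-product definitions.
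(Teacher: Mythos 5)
Your proposal is correct and matches the paper's approach: the paper's proof is simply ``As in the proof for Lemma~\ref{s2lem1},'' which here means using the isobaric decomposition ${\rm Ad}\pi \simeq \chi \boxplus \nu/\nu^\tau \boxplus (\nu/\nu^\tau)\chi$ (and the analogue for $\pi'$) established in the text immediately before the lemma, together with multiplicativity of Rankin--Selberg $L$-functions over isobaric sums and $\chi^2 = 1$. One small remark: the distinctness of the characters in your final ``delicate point'' is not actually needed for the lemma's truth (the identity of $L$-functions holds regardless of coincidences); distinctness only matters for the pole-order analysis the paper carries out afterward.
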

\begin{proof}
As in the proof for Lemma~\ref{s2lem1}.
\end{proof}

Since $\nu / \nu^\tau$ and $\mu / \mu^\tau$ are ${\rm Gal}(K / F)$-invariant, we deduce that they are quadratic characters. Neither of them is equal to $\chi$ as this would imply that 
\begin{align*}
I(\nu) \simeq \nu \boxplus (\nu \cdot \chi),
\end{align*}
or the same for $\mu$, and thus either $\pi$ or $\pi'$ would not be cuspidal.
So $L^T(s, {\rm Ad}\pi \times {\rm Ad}\pi)$ and $L^T(s, {\rm Ad}\pi' \times {\rm Ad}\pi')$ have a pole of order three at $s=1$. We also note that $\nu / \nu^\tau$ cannot be equal to $\mu / \mu^\tau$ or $\chi \cdot \mu / \mu^\tau$ otherwise $\pi$ and $\pi'$ would have adjoint lifts that agree locally almost everywhere and thus be twist-equivalent, in contradiction to the assumption made at the beginning of this section. Therefore, $L^T(s, {\rm Ad}\pi \times {\rm Ad}\pi')$ has a pole of order one at $s=1$.

We apply this to the following inequality which has been obtained via the application of the Ramanujan bound (which is known to hold for dihedral representations)
\begin{align*}
\sum_{v \not \in T}\frac{|a_v ({\rm Ad}\pi) -a_v({\rm Ad}\pi')|^2}{Nv^s}\leq 16 \sum_{v \in S}\frac{1}{Nv^s}.
\end{align*}
Dividing by $\log(1/ (s-1))$, taking the limit inferior as $s \rightarrow 1^+$, and examining the asymptotic properties of the resulting inequality, we obtain $1/4 \leq \underline{\delta}(S)$. The same bound then holds for the lower Dirichlet density of the set $S'$, which is sufficient to claim that $\pi$ and $\pi'$ differ locally for a set of places of lower Dirichlet density greater than 2/9.\\

We move on to the case where exactly one of the dihedral automorphic representations satisfies property P, and we use the same notation as before. Without loss of generality, let us say that property P holds for $\pi = I(\nu)$ (i.e., $\nu / \nu^\tau = (\nu / \nu^\tau)^\tau$).
We will compare $\pi = I(\nu)$ and $\pi' = I(\mu)$ and show that $a_v(\pi)$ and $a_v(\pi')$ differ at a set of places of density at least 1/4.

Note that for any place $w$ of $K$, we have that $\nu (w) =\pm \nu^\tau (w)$. For a place $v$ of $F$, this implies $|a_v (I(\nu))|=0 \text{ or }2$. On the other hand, $\mu / \mu^\tau \neq (\mu / \mu^\tau)^\tau$, so $\mu ^2 / (\mu^\tau)^2 $ is not the trivial character.
Then there exists a set of density 1/4 of places $v$ of $F$ which split in $K$ and at which $\mu^2 / (\mu^\tau)^2 \neq 1$ for $w \mid v$. Thus $\mu(w) \neq \pm \mu^\tau (w)$ and so $|a_v(I(\mu))| \neq 0 \text{ or }2$.
 Therefore, $I(\nu)$ and $I(\mu)$ differ locally at a set of places of density at least 1/4.\\

Lastly, we address the case where exactly one of the cuspidal automorphic representations is not dihedral.
This follows in a similar manner to the proof of Theorem~\ref{t3}(ii) in the previous section (as well as the first case of this proof). We will elaborate on this for $L^T(s,\pi \boxtimes \pi \times \overline{\pi'}\boxtimes \overline{\pi'})$.

Let us say that $\pi$ is non-dihedral and $\pi' = I(\mu)$. 
Then
\begin{align*}
\overline{\pi'}\boxtimes \overline{\pi'}
&\simeq I(\overline{\mu}^2) \boxplus I(\overline{\mu \mu^\tau})\\
& \simeq I(\overline{\mu}^2) \boxplus \overline{\mu \mu^\tau} \boxplus \overline{\mu \mu^\tau} \cdot \chi.
\end{align*}
Note that $I(\overline{\mu}^2)$ may or may not be cuspidal, depending on the properties of $\mu$. One knows that
\begin{align*}
\pi \boxtimes \pi \simeq {\rm Sym}^2 \pi \boxplus \omega,
\end{align*}
where $\omega$ is the central character of $\pi$, and ${\rm Sym}^2 \pi$ is cuspidal.
Making use of these, we find that 
\begin{align*}
L^T(s,\pi \boxtimes \pi \times \overline{\pi'}\boxtimes \overline{\pi'})
= & L^T(s, {\rm Sym}^2 \pi \times I(\overline{\mu}^2)) L^T(s, {\rm Sym}^2 \pi \times  \overline{\mu \mu^\tau})\\
& \cdot  L^T(s, {\rm Sym}^2 \pi \times  \overline{\mu \mu^\tau}\cdot \chi)
 L^T(s, \omega \times I(\overline{\mu}^2))\\
&\cdot  L^T(s, \omega \times  \overline{\mu \mu^\tau}) L^T(s, \omega \times  \overline{\mu \mu^\tau}\cdot \chi).
\end{align*}

The first three $L$-functions on the right-hand side will be invertible at $s=1$. The fourth $L$-function will either be invertible at $s=1$ or have a simple pole there.
At most one of the last two $L$-functions (on the right-hand side) can have a (simple) pole at $s=1$. So the $L$-function on the left-hand side has a pole of order at most two.

The analysis of the remaining quadruple-product $L$-functions follows in a similar way, and we obtain:
\begin{align*}
\sum \frac{a_v^i \overline{a_v}^j b_v^k \overline{b_v}^l}{{\rm N}v ^s}= &k(i,j,k,l)\cdot \log\left(\frac{1}{s-1}\right) +o\left(\log\left(\frac{1}{s-1}\right)\right)
\end{align*}
as real $s \rightarrow 1 ^+$, where $k (i,j,k,l)$ is a non-negative integer such that 
\begin{align*}
k (i,j,k,l) \leq 
     \left\{ \begin{array}{ccll}
     0& \text{ for }&(i,j,k,l)=&(2,1,0,1), (0,1,2,1), (1,2,1,0),\\
     &&&\text{ and }(1,0,1,2)   \\
     1& \text{ for }&(i,j,k,l)=&(1,1,1,1)\\
     2& \text{ for }&(i,j,k,l)=&(2,2,0,0), (2,0,0,2),\text{ and }(0,2,2,0)\\
     4& \text{ for }&(i,j,k,l)=&(0,0,2,2).
     \end{array} \right.
\end{align*}

Divide inequality~(\ref{s2ineq1}) by $\log \left( 1 / (s-1) \right)$ and take the limit inferior as $s \rightarrow 1 ^+$. Then
\begin{align*}  
  2 &\leq (14) ^{1/2} \cdot \underline{\delta}(S)^{1/2} 
\end{align*}
giving
\begin{align*}  
\frac{2}{7} & \leq \underline{\delta}(S),
\end{align*}
which completes the proof of Theorem~\ref{t3}.

\section{Examples}\label{s4}
 
We begin by constructing examples to establish that Theorems~\ref{t1},~\ref{t2}, and~\ref{t3}(ii) are sharp. We then describe the example of Serre which demonstrates that Ramakrishnan's refinement of strong multiplicity one~\cite{Ra94} is also sharp. We finish by addressing the remaining examples from Theorem~\ref{obs}.

\subsection{An octahedral example for Theorem~\ref{t1}.\\}\label{s4-1}

We prove the existence of a pair of non-isomorphic octahedral automorphic representations that agree locally for a set of places of density 3/4, proving that Theorem~\ref{t1} is sharp.

Let $\widetilde{S_4}$ denote the binary octahedral group, which is a double cover of the octahedral group $S_4$ and has presentation
\begin{align*}
<\alpha,\beta,\gamma \mid \alpha ^4 =\beta ^3 =\gamma ^2 =-1>.
\end{align*}
The conjugacy classes are $[1],[-1],[\alpha ^i],[\beta ^j],[\gamma]$, where $i =1,2,3$ and $j =1,2$. They have sizes 1, 1, 6, 8, and 12, respectively.

Two of the 2-dimensional complex irreducible representations of the binary octahedral group have the following character table:
    \begin{center}
      \begin{tabular}{r|c|c|c|c|c|c|c|c} 
      &$[1]$&$[-1]$&$[\alpha]$&$[\alpha^2]$& $[\alpha^3]$&$[\beta]$&$[\beta^2]$&$[\gamma]$ \\ \hline
 $\eta$ &2&$-2$&$\sqrt{2}$&0&$-\sqrt{2}$&1&$-1$&0 \\
 $\eta'$ &2&$-2$&$-\sqrt{2}$&0&$\sqrt{2}$&1&$-1$&0 \\
      \end{tabular}
    \end{center}

Given number fields $K$ and $F$ such that ${\rm Gal}(K/ F) \simeq \widetilde{S_4}$ and given the complex representations $\rho, \rho'$ associated to this Galois group which arise from $\eta, \eta'$ (respectively), we apply the Chebotarev density theorem to determine that 
\begin{align*}
{\rm tr}\rho ({\rm Frob}_v) = {\rm tr}\rho' ({\rm Frob}_v)
\end{align*}
holds for a set of finite places $v$ of density exactly 3/4.
Applying the Langlands--Tunnell theorem (see~\cite{Tu81}), we obtain the corresponding cuspidal automorphic representations $\pi$ and $\pi'$ with the required properties.

\subsection{A dihedral example for Theorem~\ref{t2}.\\} \label{s4-2}

We shall establish the existence of a pair of dihedral cuspidal automorphic representations that are not twist-equivalent but agree locally for a set of places of density 7/9.

Consider two number fields $K, K '$ which are $S_3$-extensions of a number field $F$ such that $K \cap K '$ is a degree 2 (Galois) extension of $F$ (for example, $K =\Q (\zeta_3, \sqrt[3]{5})$, $K' =\Q (\zeta_3, \sqrt[3]{7})$, and $F =\Q$).
Note that $S_3$ has a complex irreducible representation of degree 2 that has the following character table:

\begin{center}
      \begin{tabular}{r|c|c|c} 
        &$[(1)]$&$[(12)]$&$[(123)]$ \\ \hline
 $\tau$&2&0&$-1$ \\
      \end{tabular}
\end{center}
Fixing isomorphisms 
\begin{align*}
{\rm Gal}(K/F)\simeq S_3 \simeq {\rm Gal}(K'/F)
\end{align*}
we obtain two dihedral Artin representations $\rho, \rho'$.

We now establish the density of the set of primes at which the respective Frobenius conjugacy classes are equal and determine that the two representations are not twist-equivalent. Since 
\begin{align*}
{\rm Gal}(K K'/ F)\simeq \{(\phi,\psi) \in {\rm Gal}(K/F)\times {\rm Gal}(K'/F)\mid \phi |_{K \cap K'} = \psi  |_{K \cap K'} \}
\end{align*}
and 
\begin{align*}
{\rm Frob}_{v, K K' / F}|_{K^{(')}} ={\rm Frob}_{v, K^{(')} / F}
\end{align*}
we apply the Chebotarev density theorem to determine the density of places at which pairs of the form
$({\rm tr}\rho ({\rm Frob}_{v, K / F}), {\rm tr}\rho ({\rm Frob}_{v, K' / F}))$
occur:

\begin{center}
      \begin{tabular}{r|c|c|c} 
&$\phi,\psi$&$({\rm tr}\rho ({\rm Frob}_{v, K / F}), {\rm tr}\rho ({\rm Frob}_{v, K' / F}))$ &density\\ \hline
&(1),(1)&(2,2)&1/18\\
&(1),(123)&(2,$-1$)&2/18\\
&(123),(1)&($-1$,2)&2/18\\
&(123),(123)&($-1$,$-1$)&4/18\\
&(12),(12)&(0,0)&9/18\\
      \end{tabular}
\end{center}

Examining the table, we conclude that the respective traces of Frobenius agree exactly for a set of places of density $(1+4+9)/18=7/9$.
The strong Artin conjecture for solvable groups~\cite{AC89} then implies the existence of dihedral automorphic representations $\pi, \pi'$ that agree locally for a set of density 7/9.

Now we determine that the automorphic representations are not twist-equivalent. Observe that, for a positive density of places $v$, both
\begin{align*}
a_v(\pi) &= 2\\
a_v(\pi') &= -1
\end{align*}
hold. The quotient of their absolute values is not equal to 1, and so $\pi$ and $\pi'$ cannot differ by a unitary Hecke character.
Therefore, we can conclude that Theorem~\ref{t2} is sharp.

\subsection{An icosahedral example for Theorem~\ref{t3}(ii).\\} \label{s4-3}

We construct a pair of non-twist-equivalent (odd) icosahedral automorphic representations whose Hecke eigenvalues are equal at a set of places of density 3/5, which will imply that Theorem~\ref{t3}(ii) is sharp.

To address the structure of icosahedral Artin representations, we make use of the following proposition from~\cite[Proposition 2.1]{Wa03}:
\begin{proposition}[Wang] \label{prop-wang}
Let 
\begin{align*}
r: {\rm Gal}(\overline{\Q}/ \Q) \rightarrow {\rm GL}_2(\C)
\end{align*}
be an icosahedral representation, and denote its image as $G$.
Then 
\begin{align*}
G \simeq (\widetilde{A_5}\times \mu_{2m})/ \pm (1,1)
\end{align*}
where $\widetilde{A_5}$ is the binary icosahedral group (which is isomorphic to ${\rm SL}_2 \F_5$), and $\mu_{2m}$ is the group of roots of unity of order $2m$.

An irreducible representation $\rho$ of $G$ can be decomposed uniquely into a pair $(\rho_0,\chi)$ where $\rho_0 =\rho |_{\widetilde{A_5}}$ is an irreducible representation of $\widetilde{A_5}$, $\chi = \rho |_{\mu_{2m}}$ is a (1-dimensional) representation of $\mu_{2m}$, and we have the condition
\begin{align*}
\rho_0 (-1) \chi (-1) =I.
\end{align*}
Every such pair of irreducible representations gives an irreducible representation of $G$.
\end{proposition}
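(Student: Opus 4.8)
The statement is purely group-theoretic once one observes that, by definition, an icosahedral representation $r$ has finite image $G = r(\mathrm{Gal}(\overline{\Q}/\Q)) \subset \mathrm{GL}_2(\C)$ whose image in $\mathrm{PGL}_2(\C)$ is isomorphic to $A_5$. So the plan is: first analyze the abstract structure of such a finite subgroup $G$, then deduce the classification of its irreducible representations. I would begin by setting $Z = Z(G) = G \cap \C^\times I$, the subgroup of scalar matrices in $G$; since $A_5$ is simple, the composite $G \to \mathrm{PGL}_2(\C)$ has kernel exactly $Z$, so $Z$ is a finite cyclic group of roots of unity (times $I$) and $G/Z \cong A_5$.

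Next I would identify the derived subgroup $N := [G,G]$. Commutators have determinant $1$, so $N \subset \mathrm{SL}_2(\C)$; moreover $N$ is perfect (as $A_5$ is) and surjects onto $G/Z \cong A_5$, hence $N$ is a perfect central extension of $A_5$ inside $\mathrm{SL}_2(\C)$. Since the Schur multiplier of $A_5$ is $\Z/2\Z$ and $A_5$ has no faithful two-dimensional complex representation, the extension must be the nontrivial one, so $N \cong \widetilde{A_5} = \mathrm{SL}_2(\F_5)$, with center $\{\pm I\}$. Now $-I \in N \subseteq Z$ (scalars are central), and $N \cap Z$ consists of the scalar matrices lying in $N$, which is exactly $\{\pm I\}$; also $G = N\cdot Z$ because $G/Z$ is perfect. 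Writing $|Z| = 2m$ (it is even since $-I \in Z$) and $Z = \mu_{2m}$, the fact that $N$ and $Z$ are commuting normal subgroups with $N \cap Z = \{\pm I\}$ and product $G$ yields the internal "central product" description $G \cong (\widetilde{A_5} \times \mu_{2m})/\langle(-1,-1)\rangle$, which is the asserted isomorphism $(\widetilde{A_5}\times \mu_{2m})/\pm(1,1)$.

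For the second part, let $\rho$ be an irreducible representation of $G$. Since $\mu_{2m}$ is central, Schur's lemma forces $\rho|_{\mu_{2m}}$ to be scalar, giving a character $\chi := \rho|_{\mu_{2m}}$; and $\rho_0 := \rho|_{\widetilde{A_5}}$ is irreducible because $G = \widetilde{A_5}\cdot \mu_{2m}$ with the second factor acting by scalars. The identification $(-1,-1) \sim (1,1)$ in the quotient gives $\rho_0(-1)\chi(-1) = \rho(\overline{(-1,-1)}) = I$, the stated compatibility. Conversely, given a pair $(\rho_0,\chi)$ with $\rho_0$ irreducible on $\widetilde{A_5}$, $\chi$ a character of $\mu_{2m}$, and $\rho_0(-1)\chi(-1) = I$, the assignment $(g,z)\mapsto \rho_0(g)\chi(z)$ on $\widetilde{A_5}\times \mu_{2m}$ is trivial on $\langle(-1,-1)\rangle$, hence descends to a representation of $G$, which is irreducible (the added factor is scalar), and restriction recovers the pair, giving both existence and uniqueness.

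\textbf{Main obstacle.} The one nontrivial input is the identification $N = [G,G] \cong \widetilde{A_5}$: this rests on the classical facts that $H^2(A_5,\C^\times) \cong \Z/2\Z$ and that the split extension $A_5$ itself has no faithful $2$-dimensional complex representation, so the extension occurring inside $\mathrm{SL}_2(\C)$ must be the binary icosahedral group; everything else is routine manipulation of central products and Schur's lemma.
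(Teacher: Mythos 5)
Your argument is correct, and it is worth noting that the paper itself offers no proof of this statement: it is quoted directly from Wang~\cite[Proposition 2.1]{Wa03}, so there is no in-paper argument to compare against, and what you have written is a self-contained group-theoretic reconstruction that does the job. Two small points should be tightened. First, the claim that $N=[G,G]$ is perfect needs its one-line justification rather than the parenthetical ``as $A_5$ is'': a central extension of a perfect group need not be perfect (e.g.\ $A_5\times\Z/2\Z$), but here, since $Z$ is central and $NZ=G$ (because $G/Z\cong A_5$ is perfect), one has $[G,G]=[NZ,NZ]=[N,N]$, so $N$ is perfect; with that in hand, $N$ is a perfect central extension of $A_5$, hence a quotient of the universal central extension $\widetilde{A_5}$, and your Schur-multiplier/no-faithful-$2$-dimensional-representation argument correctly rules out $N\cong A_5$. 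Second, ``$-I\in N\subseteq Z$'' should read ``$-I\in N\cap Z$'': the point is that the unique involution of $N\cong\widetilde{A_5}$, being an order-two element of ${\rm SL}_2(\C)$, must be the scalar $-I$, hence lies in $Z$, which also forces $|Z|$ even, i.e.\ $|Z|=2m$. With these touch-ups the chain — finite image; $Z=G\cap\C^\times I=Z(G)$ since $A_5$ is centerless; $N\cong\widetilde{A_5}$; central product decomposition $G\cong(\widetilde{A_5}\times\mu_{2m})/\pm(1,1)$; and the standard Schur-lemma description of irreducibles of such a quotient, including the compatibility $\rho_0(-1)\chi(-1)=I$ coming from the identified element $(-1,-1)$ — establishes both halves of the proposition.
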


Let us now consider the 2-dimensional irreducible representations for the binary icosahedral group $\widetilde{A_5}$. This group has presentation
\begin{align*}
<\alpha,\beta,\gamma \mid \alpha ^5 =\beta ^3 =\gamma ^2 =-1>.
\end{align*}
The conjugacy classes are $[1],[-1],[\alpha^i],[\beta^j],$ and $[\gamma]$, where $i =1,2,3,4$ and $j =1,2$. Their sizes are 1, 1, 12, 20, and 30, respectively.
The binary icosahedral group has exactly two complex irreducible representations of dimension 2, with character table:
    \begin{center}
      \begin{tabular}{r|c|c|c|c|c|c|c|c|c} 
      &$\{1\}$&$\{-1\}$&$[\alpha]$&$[\alpha^2]$& $[\alpha^3]$&$[\alpha^4]$&$[\beta]$&$[\beta^2]$&$[\gamma]$ \\ \hline
 $\eta$&2&$-2$&$c$&$-c '$&$c'$&$-c$&1&$-1$&0 \\
 $\eta'$&2&$-2$&$c'$&$-c$&$c$&$-c'$&1&$-1$&0 \\
      \end{tabular}
    \end{center}
where
\begin{align*}
c =\frac{1 +\sqrt{5}}{2} \text{ and } c' =\frac{1-\sqrt{5}}{2}.
\end{align*}

Fix an Artin representation
\begin{align*}
\tau: {\rm Gal}(\overline{\Q}/ \Q) \rightarrow {\rm GL}_2(\C)
\end{align*}
that is of odd icosahedral type. 
Now Proposition~\ref{prop-wang} implies that $\tau$ factors through a finite Galois group $G$ which is isomorphic to
\begin{align*}
(\widetilde{A_5}\times \mu_{2m})/ \pm (1,1)
\end{align*}
for some $m$.

Decomposing the representation into the pair $(\tau_0,\chi)$ of irreducible representations on $\widetilde{A_5}$ and $\mu_{2m}$ (respectively), we note that $\tau_0$ must be isomorphic to either $\eta$ or $\eta'$ from the character table above. Let $\tau_0'$ correspond to the other representation. We construct another icosahedral Artin representation $\tau'$, corresponding to the pair $(\tau_0',\chi)$, which is also odd. We then apply the Chebotarev density theorem, which implies that
\begin{align*}
{\rm tr\ }\tau ({\rm Frob}_v)={\rm tr\ }\tau' ({\rm Frob}_v)
\end{align*}
holds for a set of places $v$ of density 3/5.

To these two representations $\tau$ and $\tau'$ we apply the strong Artin conjecture for odd icosahedral representations (due to Khare--Wintenberger~\cite{KW109,KW209} and Kisin~\cite{Ki09}) and obtain corresponding icosahedral automorphic representations, which we denote as $\pi$ and $\pi'$, respectively.

We now determine that they are not twist-equivalent. There exist a positive density of places $v$ where
\begin{align*}
a_v(\pi) &= c \cdot \chi(\alpha)\\
a_v(\pi') &= c' \cdot \chi(\alpha)
\end{align*}
both hold simultaneously.
The quotient of their absolute values is not 1, so $\pi$ and $\pi'$ cannot differ by a unitary Hecke character.\\

As mentioned earlier, we point out that we do not need to rely on the work of Khare--Wintenberger and Kisin in order to produce a suitable pair of icosahedral representations. Instead we can make use of one of the earlier examples of (odd) icosahedral representations, such as the one whose modularity is addressed by~\cite{BDST01} and is described in~\cite{JM}. This representation $\rho$ is associated to the polynomial (which describes the splitting field) $x ^5 - 8 x ^3 - 2 x ^2 + 31x + 74$, has conductor 3547, and odd quadratic determinant. It corresponds to a weight one modular form that we will denote as $f = f (\rho)$.

As before, we write $\rho$ as $(\rho_0,\chi)$ and construct another icosahedral Artin representation $\rho'$ which corresponds to $(\rho_0',\chi)$, where $\rho_0$ corresponds to one of the complex irreducible 2-dimensional representations $\eta$ or $\eta'$ in the character table above, and $\rho_0'$ corresponds to the other. Now ${\rm tr}(\rho ({\rm Frob}_p)) \in \Q (\sqrt{5})$, and this new representation $\rho'$ can also be constructed by pre-composing $\rho$ with the non-trivial automorphism $\tau$ of ${\rm Gal}(\Q (\sqrt{5})/\Q)$. On the modular side, $f ^\tau$ is the weight one icosahedral modular form that corresponds to $\rho'$. By the same reasoning as earlier, we note that $f$ and $f^\tau$ are not twist-equivalent. Applying the Chebotarev density theorem, we see that the set of primes at which $f$ and $f ^\tau$ have equal Hecke eigenvalues has density 3/5. 

\subsection{An example of Serre.\\} \label{s4-4}

Here we briefly describe the dihedral example of J.-P.~Serre~\cite{Se77}
which demonstrates that D.~Ramakrishnan's refinement of strong multiplicity one~\cite{Ra94} is sharp.

The quaternion group $Q_8$ has a unique 2-dimensional complex irreducible representation, which we will denote as $\tau$.
Consider the group
\begin{align*}
G = Q_8 \times \{\pm 1\}
\end{align*}
 and define two representations of $G$, denoted by $\rho $ and $\rho '$, as $ \tau  \otimes 1$ and $ \tau  \otimes {\rm sgn}$, respectively. We note that both  $\rho$ and $\rho'$ are irreducible.
The quaternion group, and thus $G$, are known to appear as a Galois group of a finite extension of number fields. Therefore, any representation of $G$ can be lifted to a representation of ${\rm Gal}(\overline{\Q}/\Q)$.

Now let $S$ be the subset $\{(+1, -1),(-1,-1)\}$ of $G$. The traces of $\rho$ and $\rho'$ agree exactly outside $S$, and note that $ |S | / |G | = 2/ 2 ^{4} = 1 /8$.
Since $G$ is nilpotent, by Arthur--Clozel~\cite{AC89} the strong Artin conjecture holds for $\rho$ and $\rho'$, so there exist corresponding (dihedral) cuspidal automorphic representations $\pi$ and $\pi'$.
One concludes that D.~Ramakrishnan's theorem is sharp.

\subsection{Further examples.\\}

We construct the three remaining examples from Theorem~\ref{obs}.

The twist-equivalent icosahedral pair of automorphic representations can be constructed as follows:
Consider the icosahedral cuspidal automorphic representation $\pi$ from subsection~\ref{s4-3} and let $\chi$ be a quadratic Hecke character. 
Using the character table from that subsection and the Chebotarev density theorem, we see that the pair $\pi$ and $\pi \otimes \chi$ agree locally exactly at a set of places of density 5/8.\\

For the construction of the two examples of pairs of tetrahedral representations in Theorem~\ref{obs}, we will need to make use of the character table for the binary tetrahedral group.
First, define $i,j,$ and $k$ to be the quaternions and let $\omega = - \frac{1}{2} (1 +i +j +k)$ (note that $\omega$ has order 3). Then the binary tetrahedral group is generated by $ i,j, \omega $.
The character table for the irreducible representations of dimensions one and two is:
    \begin{center}
      \begin{tabular}{r|c|c|c|c|c|c|c} 
        &$\{1\}$&$\{-1\}$&$[i]$&$[ \omega ]$&$[-\omega]$&$[\omega ^2]$&$[-\omega ^2]$ \\ \hline
 $\chi _0$&1&1&1&1&1&1&1\\
 $\chi _1$&1&1&1&$\zeta$&$\zeta$&$\zeta ^2$&$\zeta ^2$ \\
 $\chi _2$&1&1&1&$\zeta ^2$&$\zeta ^2$&$\zeta$&$\zeta$\\
 $\rho$&2&$-2$&0&$-1$&1&$-1$&1\\
 $\rho \otimes \chi _1$ &2&$-2$&0&$-\zeta$&$\zeta$&$-\zeta ^2$&$\zeta ^2$ \\
 $\rho \otimes \chi _2$ &2&$-2$&0&$-\zeta ^2$&$\zeta ^2$&$-\zeta$&$\zeta$\\
      \end{tabular}
    \end{center}
where $\zeta = e ^{2 \pi i /3}. $ 
Note that the conjugacy class $[i]$ has size 6, and the conjugacy classes $[\omega]$, $[-\omega]$, $[\omega ^2]$, and $[-\omega ^2]$ all have size 4.
 
Since $\rho$ has tetrahedral image in ${\rm PGL}_2(\C)$, it corresponds, by Langlands~\cite{La80}, to a tetrahedral automorphic representation $\pi$ for GL(2). If we twist $\pi$ by a quadratic Hecke character $\chi$, we note that $\pi $ and $ \pi \otimes \chi$ agree locally at a set of places of density 5/8.\\

We move on to the example of the non-twist-equivalent pair of tetrahedral representations. Now an Artin representation of tetrahedral type which factors through a binary tetrahedral Galois group ${\rm Gal}(K/ F)$ has the structure
\begin{diagram}
K\\
\dLine_{2}\\
L\\
\dLine_{4}\\
k\\
\dLine_{3}\\
F
\end{diagram}
where $L / F$ and $k / F$ are Galois extensions with groups isomorphic to $A_4$ and $\Z/3\Z$, respectively.

We consider two tetrahedral representations $\rho_1 $ and $ \rho_2$ such that they factor through Galois groups ${\rm Gal}(K_1/ F) $ and $ {\rm Gal}(K_2/ F)$, respectively, with structure
 \begin{diagram}
 K_1&&&&K_2 \\
\dLine &&&& \dLine \\
 L_1&&&&L_2 \\
 &\rdLine && \ldLine & \\
& &k&& \\
& &\dLine&& \\
&& F&&
 \end{diagram}
where $L_1 \neq L_2$ and $K_1 \neq K_2$.

We now need to establish the Galois group of the compositum $K_1 K_2$. 
We make use of the embedding
\begin{align*}
{\rm Gal}(K_1K_2/F) &\hookrightarrow {\rm Gal}(K_1/F) \times {\rm Gal}(K_2/F)\\
  \sigma & \mapsto (\sigma|_{K_1} , \sigma |_{K_2} ),
\end{align*}
which has image
\begin{align*}
H = \{ (\phi , \psi ) \mid \phi |_{K_1 \cap K_2} , \psi  |_{K_1 \cap K_2}\}.
\end{align*}

Expressing ${\rm Gal}(K_1 K_2 / F)$ as a subset of ${\rm Gal}(K_1/ F) \times {\rm Gal}(K_2/ F)$, we see that it consists exactly of:
\begin{itemize}
 \item pairs of the form $(a,b)$ where $ a,b \in \{ \pm 1, \pm i, \pm j, \pm k\}$,
 \item pairs of the form $(a,b)$ where $ a,b \in \{ \pm \omega , \pm i\omega, \pm j\omega, \pm k\omega\}$,
 \item pairs of the form $(a,b)$ where $ a,b \in \{ \pm \omega^2 , \pm i\omega^2, \pm j\omega^2, \pm k\omega^2\}$.
\end{itemize}

Since  
\begin{align*}
{\rm Frob}_{v, K_1 K_2 / F}|_{K_i} ={\rm Frob}_{v, K_i / F}
\end{align*}
for $i = 1,2$ (as mentioned earlier), we now just need to count the occurrences of pairs whose components have the same trace. These are:
\begin{itemize}
 \item pairs $(1,1)$ and $ (-1,-1)$
 \item pairs $(a,b)$ where $ a,b \in \{\pm i, \pm j, \pm k\}$
 \item pairs $(a,b)$ where $ a,b \in \{\omega, -i \omega, -j \omega, -k \omega \}$
 \item pairs $(a,b)$ where $ a,b \in \{- \omega, i \omega, j \omega, k \omega \}$
 \item pairs $(a,b)$ where $ a,b \in \{\omega ^2 , i \omega ^2, j \omega ^2, k \omega ^2 \}$
 \item pairs $(a,b)$ where $ a,b \in \{-\omega ^2 , -i \omega ^2,- j \omega ^2,- k \omega ^2 \}$.
\end{itemize}

Counting the number of these pairs $(1 +1 +36 +16 +16 +16 +16)$, out of a group of order 192, we obtain a density of 17/32 (using the Chebotarev density theorem) for those primes whose image of Frobenius under the two different representations have equal traces.

We lift these representations to ${\rm Gal}(\overline{F}/F)$, and by Langlands~\cite{La80} we obtain the existence of tetrahedral cuspidal automorphic representations $\pi, \pi'$ such that the set 
\begin{align*}
\{v \mid v \text{ unramified for $\pi$ and $\pi'$, } a_v(\pi)=a_v(\pi')\} 
\end{align*}
has a density of $17/32$.

The element $(1,i)$ in ${\rm Gal}(K_1 K_2 / F)$ has components which have traces 2 and 0, respectively. This means that there exist places $v$ of $F$ where both 
\begin{align*}
a_v(\pi) &= 2 \\
a_v(\pi') &= 0
\end{align*}
hold. Therefore, $\pi$ and $\pi'$ cannot be twist-equivalent by a unitary Hecke character.

\end{document}